\theoremstyle{plain}
\newtheorem{theorem}{Theorem}
\newtheorem{assertion}[theorem]{Assertion}
\newtheorem{proposition}[theorem]{Proposition}
\theoremstyle{definition}
\newtheorem{definition}[theorem]{Definition}
\newtheorem{corollary}[theorem]{Corollary}
\theoremstyle{remark}
\newtheorem{remark}[theorem]{Remark}
\newtheorem{example}[theorem]{Example}
\numberwithin{equation}{section}
\numberwithin{theorem}{section}
\renewcommand{\mathfrak}[1]{{\textbf{\upshape #1}}}
\renewcommand{\mathbf}{\bm}
\renewcommand{\emph}[1]{\textrm{{\upshape #1}}}
\renewcommand{\mathit}[1]{\mathscr #1}
\renewcommand{\mathtt}[1]{\scalebox{1}{\bfseries \texttt{\upshape #1}}}
\numberwithin{equation}{section}
\numberwithin{theorem}{section}
\renewcommand*{\backref}[1]{}
\renewcommand*{\backrefalt}[4]{[{\tiny%
    \ifcase #1 \textsl{Not cited}%
          \or \textsl{Cited on page}~\textcolor{BrickRed}{#2}%
          \else \textsl{Cited on pages}~\textcolor{BrickRed}{#2}%
    \fi%
    }]}
\author{\small\scshape S\lowercase{teven} D\lowercase{uplij}}
\address{
Center for Information Technology (WWU IT),
Universit\"at M\"unster,
R\"ontgenstrasse 7-13\\
D-48149 M\"unster,
Deutschland}
\email{\small \sf douplii@uni-muenster.de;
sduplij@gmail.com;
https://ivv5hpp.uni-muenster.de/u/douplii}
\title{\large\bfseries\scshape
P\lowercase{olyadization of algebraic structures}}
\date{\textit{of start} May 14, 2022. \textit{Date}:
\textit{of completion} July 29, 2022.
\newline
\mbox{}\hskip 1.16em
\textit{Total}:
40
references
}
\renewcommand{\refname}{\textsc{References}}
\let\origsection\section
\renewcommand{\section}[1]{\sectionmark{#1}\origsection{#1}}
\let\origsubsection\subsection
\renewcommand{\subsection}[1]{\subsectionmark{#1}\origsubsection{#1}}
\renewenvironment{thebibliography}[1]{%
  \@xp\origsection\@xp*\@xp{\refname}%
  \normalfont\footnotesize\labelsep .9em\relax
  \renewcommand\theenumiv{\arabic{enumiv}}\let\p@enumiv\@empty
  \vspace*{-5pt}
  \list{\@biblabel{\theenumiv}}{\settowidth\labelwidth{\@biblabel{#1}}%
    \leftmargin\labelwidth \advance\leftmargin\labelsep
    \usecounter{enumiv}}%
  \sloppy \clubpenalty\@M \widowpenalty\clubpenalty
  \sfcode`\.=\@m
}{%
  \def\@noitemerr{\@latex@warning{Empty `thebibliography' environment}}%
  \endlist
}
\subjclass[2010]{16T25, 17A42, 20B30, 20F36, 20M17, 20N15}
\keywords{direct product, direct power, polyadic semigroup, arity, polyadic ring, polyadic field}
\begin{document}
\mbox{}

\mbox{}

\begin{abstract}

\noindent A generalization of the semisimplicity concept for polyadic
algebraic structures is proposed. If semisimple structures can be presented in
block diagonal matrix form (resulting in the Wedderburn decomposition), a
general form of polyadic structures is given by block-shift matrices. We
combine these forms to get a general shape of semisimple nonderived polyadic
structures (\textquotedblleft double\textquotedblright\ decomposition of two kinds).

We then introduce the polyadization concept (a \textquotedblleft polyadic
constructor\textquotedblright) according to which one can construct a
nonderived polyadic algebraic structure of any arity from a given binary
structure. The polyadization of supersymmetric structures is also discussed.
The \textquotedblleft deformation\textquotedblright\ by shifts of operations
on the direct power of binary structures is defined and used to obtain a
nonderived polyadic multiplication. Illustrative concrete examples for the new
constructions are given.

\end{abstract}
\maketitle

\thispagestyle{empty}


\mbox{}
\tableofcontents
\newpage

\pagestyle{fancy}

\addtolength{\footskip}{15pt}

\renewcommand{\sectionmark}[1]{%
\markboth{
{ \scshape #1}}{}}

\renewcommand{\subsectionmark}[1]{%
\markright{
\mbox{\;}\\[5pt]
\textmd{#1}}{}}

\fancyhead{}
\fancyhead[EL,OR]{\leftmark}
\fancyhead[ER,OL]{\rightmark}
\fancyfoot[C]{\scshape -- \textcolor{BrickRed}{\thepage} --}

\renewcommand\headrulewidth{0.5pt}
\fancypagestyle {plain1}{ %
\fancyhf{}
\renewcommand {\headrulewidth }{0pt}
\renewcommand {\footrulewidth }{0pt}
}

\fancypagestyle{plain}{ %
\fancyhf{}
\fancyhead[C]{\scshape S\lowercase{teven} D\lowercase{uplij} \hskip 0.7cm \MakeUppercase{Polyadic Hopf algebras and quantum groups}}
\fancyfoot[C]{\scshape - \thepage  -}
\renewcommand {\headrulewidth }{0pt}
\renewcommand {\footrulewidth }{0pt}
}

\fancypagestyle{fancyref}{ %
\fancyhf{} 
\fancyhead[C]{\scshape R\lowercase{eferences} }
\fancyfoot[C]{\scshape -- \textcolor{BrickRed}{\thepage} --}
\renewcommand {\headrulewidth }{0.5pt}
\renewcommand {\footrulewidth }{0pt}
}

\fancypagestyle{emptyf}{
\fancyhead{}
\fancyfoot[C]{\scshape -- \textcolor{BrickRed}{\thepage} --}
\renewcommand{\headrulewidth}{0pt}
}
\mbox{}
\vskip 5cm
\thispagestyle{emptyf}

\section{\textsc{Introduction}}

\epigraph{I am no poet, but if you think for yourselves, as I proceed,\\ the facts will form a poem in your minds.
}{\textit{``The Life and Letters of Faraday'' (1870) by Bence Jones}\newline\textsc{Michael Faraday}}

The concept of simple and semisimple rings, modules, and algebras (see, e.g.,
\cite{erd/hol,hungerfold,lambek,rotman}) plays a crucial role in the
investigation of Lie algebras and representation theory
\cite{cur/rei,ful/har,knapp}, as well as in category theory
\cite{har70,kno2006,sim77}.

Here we first propose a generalization of this concept for polyadic algebraic
structures \cite{duplij2022}, which can also be important, e.g. in the operad
theory \cite{mar/shn/sta,lod/val} and nonassociative structures
\cite{zhe/sli/she,sab/sbi/she}. If semisimple structures can be presented in
the block-diagonal matrix form (resulting to the Wedderburn decomposition
\cite{wed1908,herstein,lam1991}), a corresponding general form for polyadic
rings can be decomposed to a kind of block-shift matrices \cite{nik84a}. We
combine these forms and introduce a general shape of semisimple polyadic
structures, which are nonderived in the sense that they cannot be obtained as
a successive composition of binary operations, which can be treated as a
polyadic (\textquotedblleft double\textquotedblright) decomposition.

Second, going in the opposite direction, we define the polyadization concept
(\textquotedblleft polyadic constructor\textquotedblright) according to which
one can construct a nonderived polyadic algebraic structure of any arity from
a given binary structure. Then we briefly describe supersymmetric structure polyadization.

Third, we propose operations \textquotedblleft deformed\textquotedblright\ by
shifts to obtain a nonderived $n$-ary multiplication on the direct power of
binary algebraic structures.

For these new constructions some illustrative concrete examples are given.

\section{\textsc{Preliminaries}}

We use notation from \cite{duplij2022,dup2022}. In brief, a (one-set)
\textsl{polyadic algebraic structure} $\mathcal{A}$ is a set $A$ closed with
respect to polyadic operations (or $n$\textsl{-ary multiplication})
$\mu^{\left[  n\right]  }:A^{n}\rightarrow A$ ($n$\textsl{-ary magma}). We
denote \textsl{polyads} \cite{pos} by bold letters $\mathbf{a}=\mathbf{a}%
^{\left(  n\right)  }=\left(  a_{1},\ldots,a_{n}\right)  $, $a_{i}\in A$. A
\textsl{polyadic zero} is defined by $\mu^{\left[  n\right]  }\left[
\mathbf{a}^{\left(  n-1\right)  },z\right]  =z$, $z\in A$, $\mathbf{a}%
^{\left(  n-1\right)  }\in A^{n-1}$, where $z$ can be on any place. A
(positive) \textsl{polyadic power} $\ell_{\mu}\in\mathbb{N}$ is
$a^{\left\langle \ell_{\mu}\right\rangle }=\left(  \mu^{\left[  n\right]
}\right)  ^{\circ\ell_{\mu}}\left[  a^{\ell_{\mu}\left(  n-1\right)
+1}\right]  ,\ \ \ \ a\in A$. An element of a polyadic algebraic structure $a$
is called $\ell_{\mu}$-\textsl{nilpotent} (or simply \textsl{nilpotent} for
$\ell_{\mu}=1$), if there exist $\ell_{\mu}$ such that $a^{\left\langle
\ell_{\mu}\right\rangle }=z$. A \textsl{polyadic (or }$n$\textsl{-ary)
identity} (or neutral element) is defined by $\mu^{\left[  n\right]  }\left[
a,e^{n-1}\right]  =a,\ \ \ \ \forall a\in A$, where $a$ can be on any place in
the l.h.s. A one-set polyadic algebraic structure $\left\langle A\mid
\mu^{\left[  n\right]  }\right\rangle $ is \textsl{totally associative}, if
$\left(  \mu^{\left[  n\right]  }\right)  ^{\circ2}\left[  \mathbf{a}%
,\mathbf{b},\mathbf{c}\right]  =\mu^{\left[  n\right]  }\left[  \mathbf{a}%
,\mu^{\left[  n\right]  }\left[  \mathbf{b}\right]  ,\mathbf{c}\right]
=invariant$, with respect to placement of the internal multiplication on any
of the $n$ places, and $\mathbf{a},\mathbf{b},\mathbf{c}$ are polyads of the
necessary sizes \cite{dup2018a,dup2019}. A \textsl{polyadic semigroup}
$\mathcal{S}^{\left(  n\right)  }$ is a one-set and one-operation structure in
which $\mu^{\left[  n\right]  }$ is totally associative. A polyadic structure
is \textsl{commutative}, if $\mu^{\left[  n\right]  }=\mu^{\left[  n\right]
}\circ\sigma$, or $\mu^{\left[  n\right]  }\left[  \mathbf{a}\right]
=\mu^{\left[  n\right]  }\left[  \sigma\circ\mathbf{a}\right]  $,
$\mathbf{a}\in A^{n}$, for all $\sigma\in S_{n}$.

A polyadic structure is \textsl{solvable}, if for all polyads $\mathbf{b}$,
$\mathbf{c}$ and an element $x$, one can (uniquely) resolve the equation (with
respect to $h$) for $\mu^{\left[  n\right]  }\left[  \mathbf{b},x,\mathbf{c}%
\right]  =a$, where $x$ can be on any place, and $\mathbf{b},\mathbf{c}$ are
polyads of the needed lengths. A solvable polyadic structure is called a
\textsl{polyadic quasigroup} \cite{belousov}. An associative polyadic
quasigroup is called a $n$-\textsl{ary} (or \textsl{polyadic})\textit{
}\textsl{group} \cite{galmak1}. In an $n$-ary group the only solution of
\begin{equation}
\mu^{\left[  n\right]  }\left[  \mathbf{b},\bar{a}\right]  =a,\ \ \ a,\bar
{a}\in A,\ \ \mathbf{b}\in A^{n-1} \label{ck-mgg}%
\end{equation}
is called a \textsl{querelement} of $a$ and denoted by $\bar{a}$ \cite{dor3},
where $\bar{a}$ can be on any place. Any idempotent $a$ coincides with its
querelement $\bar{a}=a$. The relation (\ref{ck-mgg}) can be considered as a
definition of the unary \textsl{queroperation} $\bar{\mu}^{\left(  1\right)
}\left[  a\right]  =\bar{a}$ \cite{gle/gla}. For further details and
definitions, see \cite{duplij2022}.

\bigskip

\section{\textsc{Polyadic semisimplicity}}

In general, simple algebraic structures are building blocks (direct summands)
for the semisimple ones satisfying special conditions (see, e.g.,
\cite{erd/hol,lambek}).

\subsection{Simple polyadic structures}

According to the Wedderburn-Artin theorem (see, e.g.,
\cite{herstein,lam1991,haz/gub}), a ring which is simple (having no two-sided
ideals, except zero and the ring itself) and Artinian (having minimal right
ideals) $\mathcal{R}_{simple}$ is isomorphic to a full $d\times d$ matrix ring%
\begin{equation}
\mathcal{R}_{simple}\cong Mat_{d\times d}^{full}\left(  \mathcal{D}\right)
\label{rm}%
\end{equation}
over a division ring $\mathcal{D}$. As a corollary,%
\begin{equation}
\mathcal{R}_{simple}\cong\operatorname*{Hom}\nolimits_{\mathcal{D}}\left(
V\left(  d\mid\mathcal{D}\right)  ,V\left(  d\mid\mathcal{D}\right)  \right)
\equiv\operatorname*{End}\nolimits_{\mathcal{D}}\left(  V\left(
d\mid\mathcal{D}\right)  \right)  , \label{r}%
\end{equation}
where $V\left(  d\mid\mathcal{D}\right)  $ is a $d$-finite-dimensional vector
space (left module) over $\mathcal{D}$. In the same way, a finite-dimensional
simple associative algebra $\mathcal{A}$ over an algebraically closed field
$\mathcal{F}$ is%
\begin{equation}
\mathcal{A}\cong Mat_{d\times d}^{full}\left(  \mathcal{F}\right)  .
\label{af}%
\end{equation}

In the polyadic case, the structure of a simple Artinian $\left[  2,n\right]
$-ring $\mathcal{R}_{simple}^{\left[  2,n\right]  }$ (with binary addition and
$n$-ary multiplication $\mu^{\left[  n\right]  }$) was obtained in
\cite{nik84a}, where the Wedderburn-Artin theorem for $\left[  2,n\right]
$-rings was proved. So instead of one vector space $V\left(  d\mid
\mathcal{D}\right)  $, one should consider a direct sum of $\left(
n-1\right)  $ vector spaces (over the same division ring $\mathcal{D}$), that
is
\begin{equation}
V_{1}\left(  d_{1}\mid\mathcal{D}\right)  \oplus V_{2}\left(  d_{2}%
\mid\mathcal{D}\right)  \oplus\ldots\ldots\oplus V_{n-1}\left(  d_{n-1}%
\mid\mathcal{D}\right)  , \label{vd}%
\end{equation}
where $V_{i}\left(  d_{i}\mid\mathcal{D}\right)  $ is a $d_{i}$-dimensional
polyadic vector space \cite{dup2019}, $i=1,\ldots,n-1$. Then, instead of
(\ref{r}) we have the cyclic direct sum of homomorphisms%
\begin{align}
\mathcal{R}_{simple}^{\left[  2,n\right]  }  &  \cong\operatorname*{Hom}%
\nolimits_{\mathcal{D}}\left(  V_{1}\left(  d_{1}\mid\mathcal{D}\right)
,V_{2}\left(  d_{2}\mid\mathcal{D}\right)  \right)  \oplus\operatorname*{Hom}%
\nolimits_{\mathcal{D}}\left(  V_{2}\left(  d_{2}\mid\mathcal{D}\right)
,V_{3}\left(  d_{3}\mid\mathcal{D}\right)  \right)  \oplus\ldots\nonumber\\
&  \ldots\oplus\operatorname*{Hom}\nolimits_{\mathcal{D}}\left(
V_{n-1}\left(  d_{n-1}\mid\mathcal{D}\right)  ,V_{1}\left(  d_{1}%
\mid\mathcal{D}\right)  \right)  . \label{rh}%
\end{align}
This means that after choosing a suitable basis in terms of matrices (when the
ring multiplication $\mu^{\left[  n\right]  }$ coincides with the product of
$n$ matrices) we have

\begin{theorem}
The simple polyadic ring $\mathcal{R}_{simple}^{\left[  2,n\right]  }$ is
isomorphic to the $d\times d$ matrix ring (cf. (\ref{rm}))%
\begin{equation}
\mathcal{R}_{simple}^{\left[  2,n\right]  }\cong Mat_{d\times d}^{shift\left(
n\right)  }\left(  \mathcal{D}\right)  =\left\{  \mathrm{M}^{shift\left(
n\right)  }\left(  d\times d\right)  \mid\nu^{\left[  2\right]  },\mu^{\left[
n\right]  }\right\}  , \label{rmm}%
\end{equation}
where $\nu^{\left[  2\right]  }$ and $\mu^{\left[  n\right]  }$ are binary
addition and ordinary product of $n$ matrices, $\mathrm{M}_{d\times d}%
^{shift}$ is the block-shift (traceless) matrix over $\mathcal{D}$ of the form
(which follows from (\ref{rh}))%
\begin{equation}
\mathrm{M}^{shift\left(  n\right)  }\left(  d\times d\right)  =\left(
\begin{array}
[c]{ccccc}%
0 & \mathrm{B}_{1}\left(  d_{1}\times d_{2}\right)  & \ldots & 0 & 0\\
0 & 0 & \mathrm{B}_{2}\left(  d_{2}\times d_{3}\right)  & \ldots & 0\\
0 & 0 & \ddots & \ddots & \vdots\\
\vdots & \vdots & \ddots & 0 & \mathrm{B}_{n-2}\left(  d_{n-2}\times
d_{n-1}\right) \\
\mathrm{B}_{n-1}\left(  d_{n-1}\times d_{1}\right)  & 0 & \ldots & 0 & 0
\end{array}
\right)  , \label{mb}%
\end{equation}
where $\left(  n-1\right)  $ blocks are nonsquare matrices $\mathrm{B}%
_{i}\left(  d^{\prime}\times d^{\prime\prime}\right)  \in Mat_{d^{\prime
}\times d^{\prime\prime}}^{full}\left(  \mathcal{D}\right)  $ over the
division ring $\mathcal{D}$, and $d=d_{1}+d_{2}+\ldots+d_{n-1}$.{}
\end{theorem}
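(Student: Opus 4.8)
The plan is to convert the abstract cyclic direct-sum description (\ref{rh}) into explicit matrices by a choice of bases, and then to check that the ordinary product of exactly $n$ of the resulting matrices reproduces both the cyclic composition of homomorphisms and the block-shift shape (\ref{mb}). First I would fix a $\mathcal{D}$-basis in each polyadic vector space $V_i(d_i\mid\mathcal{D})$, $i=1,\ldots,n-1$. Since the $V_i$ are finite-dimensional (left) modules over the division ring $\mathcal{D}$, each summand $\operatorname{Hom}_{\mathcal{D}}(V_i,V_{i+1})$ is identified, once the side of the scalar action is fixed so that composition matches the matrix product, with the full space $Mat^{full}_{d_i\times d_{i+1}}(\mathcal{D})$ of rectangular $d_i\times d_{i+1}$ matrices $\mathrm{B}_i$ over $\mathcal{D}$ (indices read cyclically, $V_n:=V_1$). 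Assembling these blocks into one $d\times d$ array, $d=d_1+\cdots+d_{n-1}$, partitioned into $(n-1)\times(n-1)$ blocks and placing $\mathrm{B}_i$ in the superdiagonal slot $(i,i+1)$ together with $\mathrm{B}_{n-1}$ in the corner slot $(n-1,1)$, produces exactly the block-shift matrix $\mathrm{M}^{shift(n)}$ of (\ref{mb}). This assignment is manifestly a bijection onto $Mat^{shift(n)}_{d\times d}(\mathcal{D})$, and because it is block-wise linear it intertwines the binary addition $\nu^{[2]}$ with ordinary matrix addition, which preserves the block-shift pattern.

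The substantive step is the multiplicative one. I would observe that a block-shift matrix acts on the $(n-1)$-fold block decomposition as a cyclic shift by one position: the ordinary product of a shift-by-one matrix with another shift-by-one matrix is a shift-by-two matrix, and more generally the product of $k$ block-shift matrices is supported on the blocks $(i,i+k)$ (indices mod $n-1$), its nonzero entries being the corresponding ordered products of the blocks $\mathrm{B}$. For $k=n$ one has $n\equiv1\pmod{n-1}$, so the product of exactly $n$ block-shift matrices is again supported on the superdiagonal-plus-corner slots, i.e. is again a block-shift matrix. Tracking the entries shows that this product realizes precisely the cyclic composition of the homomorphisms in (\ref{rh}), so the map is compatible with the $n$-ary multiplication $\mu^{[n]}$ taken as the product of $n$ matrices. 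Hence it is an isomorphism of $[2,n]$-rings, and (\ref{rmm}) follows.

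The main obstacle — really the conceptual heart rather than a hard computation — is the closure bookkeeping: one must check that the shift count $n$ collapses to $1$ modulo $n-1$, so that $n$ factors return to the block-shift form, whereas the binary product of two such matrices falls outside it (landing on the blocks $(i,i+2)$, which is block-diagonal when $n=3$). This is exactly what forces the multiplication to be genuinely $n$-ary and nonderived, and it is the point at which the cyclic index matching and the compatibility of the rectangular block dimensions $d_i\times d_{i+1}$ under composition must be verified with care.
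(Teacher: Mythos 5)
Your proposal is correct and follows essentially the same route as the paper: starting from the cyclic direct sum of homomorphisms (\ref{rh}) (which the paper takes from Nikitin's polyadic Wedderburn--Artin theorem), choosing bases to identify each $\operatorname{Hom}_{\mathcal{D}}\left(  V_{i},V_{i+1}\right)  $ with rectangular matrices, and verifying that block-shift matrices are closed under binary addition and the product of exactly $n$ matrices. Your explicit mod-$\left(  n-1\right)  $ bookkeeping (that $n\equiv1$ returns the product to block-shift form, while fewer factors leave it) is just a fleshed-out version of the closure assertion the paper states without detail.
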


\begin{remark}
The set of the fixed size blocks $\left\{  \mathrm{B}_{i}\left(  d^{\prime
}\times d^{\prime\prime}\right)  \right\}  $ does not form a binary ring,
because $d^{\prime}\neq d^{\prime\prime}$.
\end{remark}

\begin{assertion}
The block-shift matrices of the form (\ref{mb}) are closed with respect to
$n$-ary multiplication and binary addition, and we call them $n$-\textsl{ary
matrices}.
\end{assertion}

Taking distributivity into account we arrive at the polyadic ring structure
(\ref{rmm}).

\begin{corollary}
In the limiting case $n=2$, we have%
\begin{equation}
\mathrm{M}^{shift\left(  n=2\right)  }\left(  d\times d\right)  =\mathrm{B}%
_{1}\left(  d_{1}\times d_{1}\right)  \label{m2}%
\end{equation}
and $d=d_{1}$, giving a binary ring (\ref{rm}).
\end{corollary}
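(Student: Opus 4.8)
The statement is a direct specialization of the structure theorem to the lowest arity, so the plan is simply to set $n=2$ in each ingredient of the construction and track how the cyclic and shift data degenerate. First I would count the summands. The direct sum of polyadic vector spaces (\ref{vd}) has $n-1$ terms, so for $n=2$ it collapses to the single space $V_{1}\left(  d_{1}\mid\mathcal{D}\right)$; correspondingly the defining sum $d=d_{1}+d_{2}+\ldots+d_{n-1}$ has only one term and reduces to $d=d_{1}$.

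Next I would examine the cyclic Hom-decomposition (\ref{rh}). Its summands form the cycle $V_{1}\to V_{2}\to\ldots\to V_{n-1}\to V_{1}$, and for $n=2$ this cycle has a single arrow, namely the wrap-around $V_{n-1}\to V_{1}$ with $n-1=1$, so (\ref{rh}) reduces to the lone term $\operatorname*{Hom}\nolimits_{\mathcal{D}}\left(  V_{1},V_{1}\right)  =\operatorname*{End}\nolimits_{\mathcal{D}}\left(  V_{1}\right)$. This is exactly the classical Wedderburn--Artin endomorphism ring (\ref{r}), and after the choice of basis it becomes the full $d\times d$ matrix ring (\ref{rm}).

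Turning to the matrix shape (\ref{mb}), the block grid is $\left(  n-1\right)  \times\left(  n-1\right)$, which for $n=2$ is a single $1\times1$ cell. The defining feature of the shift form is that each block sits strictly off the diagonal (row index differing from column index); in a $1\times1$ grid no such off-diagonal placement exists, so the single block $\mathrm{B}_{1}$ is forced onto the diagonal. Since its size is $d_{1}\times d_{1}$, it is now \emph{square}, and we obtain $\mathrm{M}^{shift\left(  n=2\right)  }\left(  d\times d\right)  =\mathrm{B}_{1}\left(  d_{1}\times d_{1}\right)$ with $d=d_{1}$, as claimed.

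The only point requiring care---and the closest thing to an obstacle---is compatibility with the non-degeneracy condition of the Remark, which observes that the genuine polyadic blocks fail to form a binary ring precisely because $d^{\prime}\neq d^{\prime\prime}$. At $n=2$ this inequality is exactly what disappears: the lone block has equal row and column dimension $d_{1}=d_{1}$, the obstruction to binary closure is removed, the $n$-ary product of (\ref{rmm}) becomes literally the ordinary product of two matrices, and the Assertion's $n$-ary matrices reduce to an honest binary matrix ring. Thus the limiting case consistently recovers (\ref{rm}), completing the argument.
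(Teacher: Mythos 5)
Your proposal is correct and follows exactly the route the paper intends: the Corollary is stated there without a separate proof, as an immediate specialization of the construction (\ref{vd})--(\ref{mb}) at $n=2$, and your argument simply spells out that specialization (one summand in (\ref{vd}), the lone wrap-around term $\operatorname*{Hom}\nolimits_{\mathcal{D}}\left(V_{1},V_{1}\right)$ in (\ref{rh}), and the single block landing on the diagonal of the $1\times1$ grid, hence square). Your closing observation that the obstruction $d^{\prime}\neq d^{\prime\prime}$ from the Remark disappears precisely at $n=2$ is a nice consistency check, fully in the spirit of the text.
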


\begin{assertion}
A finite-dimensional simple associative $n$-ary algebra $\mathcal{A}^{\left(
n\right)  }$ over an algebraically closed field $\mathcal{F}$ \cite{car4} is
isomorphic to the block-shift $n$-ary matrix (\ref{mb}) over $\mathcal{F}$%
\begin{equation}
\mathcal{A}^{\left(  n\right)  }\cong Mat_{d\times d}^{shift\left(  n\right)
}\left(  \mathcal{F}\right)  . \label{anf}%
\end{equation}

\end{assertion}

\subsection{Semisimple polyadic structures}

The Wedderburn-Artin theorem for semisimple Artinian rings $\mathcal{R}%
_{semispl}$ states that $\mathcal{R}_{semispl}$ is a finite direct product of
$k$ simple rings, each of which has the form (\ref{rm}). Using (\ref{r}) for
each component, we decompose the $d$-finite-dimensional vector space (left
module) into a direct sum of length $k$%
\begin{equation}
V\left(  d\right)  =W^{\left(  1\right)  }\left(  q^{\left(  1\right)  }%
\mid\mathcal{D}^{\left(  1\right)  }\right)  \oplus W^{\left(  2\right)
}\left(  q^{\left(  2\right)  }\mid\mathcal{D}^{\left(  2\right)  }\right)
\oplus\ldots\oplus W^{\left(  k\right)  }\left(  q^{\left(  k\right)  }%
\mid\mathcal{D}^{\left(  k\right)  }\right)  , \label{vw}%
\end{equation}
where $d=q^{\left(  1\right)  }+q^{\left(  2\right)  }+\ldots+q^{\left(
k\right)  }$. Then, instead of (\ref{r}) we have the isomorphism\footnote{We
enumerate simple components by an upper index in round brackets $\left(
k\right)  $, block-shift components by lower index without brackets, and the
arity is an upper index in square brackets $\left[  n\right]  $.}%
\begin{equation}
\mathcal{R}_{semispl}\cong\operatorname*{End}\nolimits_{\mathcal{D}^{\left(
1\right)  }}W^{\left(  1\right)  }\left(  q^{\left(  1\right)  }%
\mid\mathcal{D}^{\left(  1\right)  }\right)  \oplus\operatorname*{End}%
\nolimits_{\mathcal{D}^{\left(  2\right)  }}W^{\left(  2\right)  }\left(
q^{\left(  2\right)  }\mid\mathcal{D}^{\left(  2\right)  }\right)
\oplus\ldots\oplus\operatorname*{End}\nolimits_{\mathcal{D}^{\left(  k\right)
}}W^{\left(  k\right)  }\left(  q^{\left(  k\right)  }\mid\mathcal{D}^{\left(
k\right)  }\right)  . \label{rw}%
\end{equation}

In a suitable basis the Wedderburn-Artin theorem follows

\begin{theorem}
A semisimple Artinian (binary) ring $\mathcal{R}_{semispl}$ is isomorphic to
the $d\times d$ matrix ring%
\begin{equation}
\mathcal{R}_{semispl}\cong Mat_{q^{\left(  j\right)  }\times q^{\left(
j\right)  }}^{diag\left(  k\right)  }\left(  \mathcal{D}\right)  =\left\{
\mathrm{M}^{diag\left(  k\right)  }\left(  d\times d\right)  \mid\nu^{\left[
2\right]  },\mu^{\left[  2\right]  }\right\}  ,
\end{equation}
where $\nu^{\left[  2\right]  }$ and $\mu^{\left[  2\right]  }$ are binary
addition and binary product of matrices, $\mathrm{M}^{diag\left(  k\right)
}\left(  d\times d\right)  $ are block-diagonal matrices of the form (which
follows from (\ref{rw}))%
\begin{equation}
\mathrm{M}^{diag\left(  k\right)  }\left(  d\times d\right)  =\left(
\begin{array}
[c]{cccc}%
\mathrm{A}^{\left(  1\right)  }\left(  q^{\left(  1\right)  }\times q^{\left(
1\right)  }\right)  & 0 & \ldots & 0\\
0 & \mathrm{A}^{\left(  2\right)  }\left(  q^{\left(  2\right)  }\times
q^{\left(  2\right)  }\right)  & \ddots & \vdots\\
\vdots & \vdots & \ddots & 0\\
0 & 0 & \ldots & \mathrm{A}^{\left(  k\right)  }\left(  q^{\left(  k\right)
}\times q^{\left(  k\right)  }\right)
\end{array}
\right)  , \label{ma}%
\end{equation}
where $k$ square blocks are full matrix rings over division rings
$\mathcal{D}^{\left(  j\right)  }$%
\begin{equation}
\mathrm{A}^{\left(  j\right)  }\left(  q^{\left(  j\right)  }\times q^{\left(
j\right)  }\right)  \in Mat_{q^{\left(  j\right)  }\times q^{\left(  j\right)
}}^{full}\left(  \mathcal{D}^{\left(  j\right)  }\right)  ,\ \ j=1,\ldots
,k,\ \ d=q^{\left(  1\right)  }+q^{\left(  2\right)  }+\ldots+q^{\left(
k\right)  }. \label{aq}%
\end{equation}

\end{theorem}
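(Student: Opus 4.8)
The plan is to reduce everything to the simple case already recorded in (\ref{rm}). By hypothesis (the classical Wedderburn--Artin theorem for semisimple Artinian rings quoted just above the statement), $\mathcal{R}_{semispl}$ is a finite direct product of $k$ simple Artinian rings $\mathcal{R}^{(j)}$, $j=1,\ldots,k$. First I would apply the simple version of the theorem to each factor separately, obtaining division rings $\mathcal{D}^{(j)}$ and integers $q^{(j)}$ with $\mathcal{R}^{(j)}\cong Mat_{q^{(j)}\times q^{(j)}}^{full}(\mathcal{D}^{(j)})$, or equivalently, in the module language of (\ref{r}), $\mathcal{R}^{(j)}\cong\operatorname{End}_{\mathcal{D}^{(j)}}W^{(j)}(q^{(j)}\mid\mathcal{D}^{(j)})$ for the $q^{(j)}$-dimensional space $W^{(j)}$.

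Next I would assemble these into the decomposition (\ref{vw})--(\ref{rw}). The defining module $V(d)$ splits as the direct sum of the $W^{(j)}$ with $d=q^{(1)}+\ldots+q^{(k)}$, and since the factors of a direct product annihilate one another, each $\mathcal{R}^{(j)}$ acts only on its own summand $W^{(j)}$ and by zero on the others. This is precisely the content of the isomorphism (\ref{rw}): $\mathcal{R}_{semispl}$ is the direct sum of the endomorphism rings $\operatorname{End}_{\mathcal{D}^{(j)}}W^{(j)}$.

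To pass to the matrix form (\ref{ma}) I would choose the suitable basis by concatenating fixed bases of the summands $W^{(1)},\ldots,W^{(k)}$ to obtain a basis of $V(d)$. With respect to this adapted basis, an endomorphism preserving the decomposition---which is exactly what an element of the direct product does---is represented by a block-diagonal $d\times d$ matrix whose $j$-th diagonal block is the $q^{(j)}\times q^{(j)}$ matrix $\mathrm{A}^{(j)}$ of its action on $W^{(j)}$, yielding (\ref{ma})--(\ref{aq}). It then remains to verify closure: block-diagonal matrices are trivially closed under the binary addition $\nu^{[2]}$ (blockwise), and under the binary product $\mu^{[2]}$ the off-diagonal zero blocks force the product to stay block-diagonal with $j$-th block the product of the corresponding $j$-th blocks; hence the correspondence is a genuine ring isomorphism onto the full set of matrices of shape (\ref{ma}).

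The genuinely deep input---that a semisimple Artinian ring is a finite direct product of simple ones---is exactly what is quoted from the classical theorem, so it is assumed rather than reproved. The point that still needs care is that the constructed additive bijection respects multiplication as well, i.e. that no cross terms between distinct summands appear. The cleanest way to guarantee this is to introduce the orthogonal central idempotents $e_1,\ldots,e_k$ with $e_ie_j=\delta_{ij}e_j$ and $e_1+\ldots+e_k=1$ implementing the projections onto the $W^{(j)}$; their orthogonality is precisely what kills the off-diagonal blocks and pins down the block-diagonal (as opposed to block-shift) shape, marking the structural contrast with the simple polyadic ring of (\ref{mb}).
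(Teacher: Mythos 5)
Your proposal is correct and follows essentially the same route as the paper: invoke the classical Wedderburn--Artin decomposition into $k$ simple factors, apply the simple case (\ref{rm})--(\ref{r}) componentwise to obtain the module splitting (\ref{vw}) and the endomorphism-ring decomposition (\ref{rw}), then pass to an adapted (concatenated) basis to read off the block-diagonal form (\ref{ma})--(\ref{aq}). The extra care you take with closure and the orthogonal central idempotents merely makes explicit what the paper compresses into the phrase ``in a suitable basis the Wedderburn--Artin theorem follows.''
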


The same matrix structure has a finite-dimensional semisimple associative
algebra $\mathcal{A}$ over an algebraically closed field $\mathcal{F}$ (see
(\ref{af})). For further details, see, e.g., \cite{herstein,lam1991,haz/gub}.

General properties of semisimple Artinian $\left[  2,n\right]  $-rings were
considered in \cite{nik84a} (for ternary rings, see \cite{lis,pro1982}). Here
we propose a new manifest matrix structure for them.

Thus, our task is to decompose each of the $V_{i}\left(  d_{i}\right)  $, in
(\ref{vd}) into components as in (\ref{vw})%
\begin{equation}
V_{i}\left(  d_{i}\right)  =W_{i}^{\left(  1\right)  }\left(  q_{i}^{\left(
1\right)  }\mid\mathcal{D}^{\left(  1\right)  }\right)  \oplus W_{i}^{\left(
2\right)  }\left(  q_{i}^{\left(  2\right)  }\mid\mathcal{D}^{\left(
2\right)  }\right)  \oplus\ldots\oplus W_{i}^{\left(  k\right)  }\left(
q_{i}^{\left(  k\right)  }\mid\mathcal{D}^{\left(  k\right)  }\right)
,\ \ \ i=1,\ldots,n-1.
\end{equation}

In matrix language this means that each block $\mathrm{B}_{d^{\prime}\times
d^{\prime\prime}}$ from the polyadic ring (\ref{mb}) should have the
semisimple decomposition (\ref{ma}), i.e. be a block-diagonal square matrix of
the same size $p\times p$, where $p=d_{1}=d_{2}=\ldots=d_{n-1}$ and the total
matrix size becomes $d=\left(  n-1\right)  p$. Moreover, all the blocks
$\mathrm{B}$'s should have diagonal blocks $\mathrm{A}$'s of the same size,
and therefore $q^{\left(  j\right)  }\equiv q_{1}^{\left(  j\right)  }%
=q_{2}^{\left(  j\right)  }=\ldots=q_{n-1}^{\left(  j\right)  }$ for all
$j=1,\ldots,k$ and $p=q^{\left(  1\right)  }+q^{\left(  2\right)  }%
+\ldots+q^{\left(  k\right)  }$, where $k$ is the number of semisimple
components. In this way the cyclic direct sum of homomorphisms for the
semisimple polyadic rings becomes (we use different division rings for each
semisimple component as in (\ref{aq}))%
\begin{align}
\mathcal{R}_{semispl}^{\left[  2,n\right]  }  &  \cong\operatorname*{Hom}%
\nolimits_{\mathcal{D}^{\left(  1\right)  }}\left(  W_{1}^{\left(  1\right)
}\left(  q^{\left(  1\right)  }\mid\mathcal{D}^{\left(  1\right)  }\right)
,W_{2}^{\left(  1\right)  }\left(  q^{\left(  1\right)  }\mid\mathcal{D}%
^{\left(  1\right)  }\right)  \right) \nonumber\\
&  \oplus\operatorname*{Hom}\nolimits_{\mathcal{D}^{\left(  2\right)  }%
}\left(  W_{1}^{\left(  2\right)  }\left(  q^{\left(  2\right)  }%
\mid\mathcal{D}^{\left(  2\right)  }\right)  ,W_{2}^{\left(  2\right)
}\left(  q^{\left(  2\right)  }\mid\mathcal{D}^{\left(  2\right)  }\right)
\right)  \oplus\ldots\nonumber\\
&  \ldots\oplus\operatorname*{Hom}\nolimits_{\mathcal{D}^{\left(  k\right)  }%
}\left(  W_{1}^{\left(  k\right)  }\left(  q^{\left(  k\right)  }%
\mid\mathcal{D}^{\left(  k\right)  }\right)  ,W_{2}^{\left(  k\right)
}\left(  q^{\left(  k\right)  }\mid\mathcal{D}^{\left(  k\right)  }\right)
\right) \nonumber\\
&  \oplus\operatorname*{Hom}\nolimits_{\mathcal{D}^{\left(  1\right)  }%
}\left(  W_{2}^{\left(  1\right)  }\left(  q^{\left(  1\right)  }%
\mid\mathcal{D}^{\left(  1\right)  }\right)  ,W_{3}^{\left(  1\right)
}\left(  q^{\left(  1\right)  }\mid\mathcal{D}^{\left(  1\right)  }\right)
\right) \nonumber\\
&  \oplus\operatorname*{Hom}\nolimits_{\mathcal{D}^{\left(  2\right)  }%
}\left(  W_{2}^{\left(  2\right)  }\left(  q^{\left(  2\right)  }%
\mid\mathcal{D}^{\left(  2\right)  }\right)  ,W_{3}^{\left(  2\right)
}\left(  q^{\left(  2\right)  }\mid\mathcal{D}^{\left(  2\right)  }\right)
\right)  \oplus\ldots\nonumber\\
&  \ldots\oplus\operatorname*{Hom}\nolimits_{\mathcal{D}^{\left(  k\right)  }%
}\left(  W_{2}^{\left(  k\right)  }\left(  q^{\left(  k\right)  }%
\mid\mathcal{D}^{\left(  k\right)  }\right)  ,W_{3}^{\left(  k\right)
}\left(  q^{\left(  k\right)  }\mid\mathcal{D}^{\left(  k\right)  }\right)
\right) \nonumber\\
&  \vdots\nonumber\\
&  \oplus\operatorname*{Hom}\nolimits_{\mathcal{D}^{\left(  1\right)  }%
}\left(  W_{n-2}^{\left(  1\right)  }\left(  q^{\left(  1\right)  }%
\mid\mathcal{D}^{\left(  1\right)  }\right)  ,W_{n-1}^{\left(  1\right)
}\left(  q^{\left(  1\right)  }\mid\mathcal{D}^{\left(  1\right)  }\right)
\right) \nonumber\\
&  \oplus\operatorname*{Hom}\nolimits_{\mathcal{D}^{\left(  2\right)  }%
}\left(  W_{n-2}^{\left(  2\right)  }\left(  q^{\left(  2\right)  }%
\mid\mathcal{D}^{\left(  2\right)  }\right)  ,W_{n-1}^{\left(  2\right)
}\left(  q^{\left(  2\right)  }\mid\mathcal{D}^{\left(  2\right)  }\right)
\right)  \oplus\ldots\nonumber\\
&  \ldots\oplus\operatorname*{Hom}\nolimits_{\mathcal{D}^{\left(  k\right)  }%
}\left(  W_{n-2}^{\left(  k\right)  }\left(  q^{\left(  k\right)  }%
\mid\mathcal{D}^{\left(  k\right)  }\right)  ,W_{n-1}^{\left(  k\right)
}\left(  q^{\left(  k\right)  }\mid\mathcal{D}^{\left(  k\right)  }\right)
\right) \nonumber\\
&  \oplus\operatorname*{Hom}\nolimits_{\mathcal{D}^{\left(  1\right)  }%
}\left(  W_{n-1}^{\left(  1\right)  }\left(  q^{\left(  1\right)  }%
\mid\mathcal{D}^{\left(  1\right)  }\right)  ,W_{1}^{\left(  1\right)
}\left(  q^{\left(  1\right)  }\mid\mathcal{D}^{\left(  1\right)  }\right)
\right) \nonumber\\
&  \oplus\operatorname*{Hom}\nolimits_{\mathcal{D}^{\left(  2\right)  }%
}\left(  W_{n-1}^{\left(  2\right)  }\left(  q^{\left(  2\right)  }%
\mid\mathcal{D}^{\left(  2\right)  }\right)  ,W_{1}^{\left(  2\right)
}\left(  q^{\left(  2\right)  }\mid\mathcal{D}^{\left(  2\right)  }\right)
\right)  \oplus\ldots\nonumber\\
&  \ldots\oplus\operatorname*{Hom}\nolimits_{\mathcal{D}^{\left(  k\right)  }%
}\left(  W_{n-1}^{\left(  k\right)  }\left(  q^{\left(  k\right)  }%
\mid\mathcal{D}^{\left(  k\right)  }\right)  ,W_{1}^{\left(  k\right)
}\left(  q^{\left(  k\right)  }\mid\mathcal{D}^{\left(  k\right)  }\right)
\right)  . \label{rh2}%
\end{align}

After choosing a suitable basis we obtain a polyadic analog of the
Wedderburn-Artin theorem for semisimple Artinian $\left[  2,n\right]  $-rings
$\mathcal{R}_{semispl}^{\left[  2,n\right]  }$, which can be called as the
\textsl{double decomposition} (of the \textsl{first kind} or
\textsl{shift-diagonal}).

\begin{theorem}
The semisimple polyadic Artinian ring $\mathcal{R}_{semispl}^{\left[
2,n\right]  }$ (of the first kind) is isomorphic to the $d\times d$ matrix
ring%
\begin{equation}
\mathcal{R}_{semispl}^{\left[  2,n\right]  }\cong Mat_{d\times d}%
^{shift\text{-}diag\left(  n,k\right)  }\left(  \mathcal{D}\right)
=\left\langle \left\{  \mathrm{N}^{shift\text{-}diag\left(  n,k\right)
}\left(  d\times d\right)  \right\}  \mid\nu^{\left[  2\right]  },\mu^{\left[
n\right]  }\right\rangle ,
\end{equation}
where $\nu^{\left[  2\right]  },\mu^{\left[  n\right]  }$ are binary addition
and ordinary product of $n$ matrices, $\mathrm{N}_{d\times d}^{shift\text{-}%
diag\left(  n,k\right)  }$ ($n$ is arity of $\mathrm{N}$'s and $k$ is number
of simple components of $\mathrm{N}$'s) are the block-shift $n$-ary matrices
with block-diagonal square blocks (which follows from (\ref{rh2}))%
\begin{align}
\mathrm{N}^{shift\text{-}diag\left(  n,k\right)  }\left(  d\times d\right)
&  =\left(
\begin{array}
[c]{ccccc}%
0 & \mathrm{B}_{1}^{\left(  k\right)  }\left(  p\times p\right)  & \ldots &
0 & 0\\
0 & 0 & \mathrm{B}_{2}^{\left(  k\right)  }\left(  p\times p\right)  & \ldots
& 0\\
0 & 0 & \ddots & \ddots & \vdots\\
\vdots & \vdots & \ddots & 0 & \mathrm{B}_{n-2}^{\left(  k\right)  }\left(
p\times p\right) \\
\mathrm{B}_{n-1}^{\left(  k\right)  }\left(  p\times p\right)  & 0 & \ldots &
0 & 0
\end{array}
\right)  ,\label{n1}\\
\mathrm{B}_{i}^{\left(  k\right)  }\left(  p\times p\right)   &  =\left(
\begin{array}
[c]{cccc}%
\mathrm{A}_{i}^{\left(  1\right)  }\left(  q^{\left(  1\right)  }\times
q^{\left(  1\right)  }\right)  & 0 & \ldots & 0\\
0 & \mathrm{A}_{i}^{\left(  2\right)  }\left(  q^{\left(  2\right)  }\times
q^{\left(  2\right)  }\right)  & \ddots & \vdots\\
\vdots & \vdots & \ddots & 0\\
0 & 0 & \ldots & \mathrm{A}_{i}^{\left(  k\right)  }\left(  q^{\left(
k\right)  }\times q^{\left(  k\right)  }\right)
\end{array}
\right)  , \label{n2}%
\end{align}
where%
\begin{align}
d  &  =\left(  n-1\right)  p,\\
p  &  =q^{\left(  1\right)  }+q^{\left(  2\right)  }+\ldots+q^{\left(
k\right)  },
\end{align}
and the $k$ square blocks $\mathrm{A}$'s are full matrix rings over the
division rings $\mathcal{D}^{\left(  j\right)  }$%
\begin{equation}
\mathrm{A}_{i}^{\left(  j\right)  }\left(  q^{\left(  j\right)  }\times
q^{\left(  j\right)  }\right)  \in Mat_{q^{\left(  j\right)  }\times
q^{\left(  j\right)  }}^{full}\left(  \mathcal{D}^{\left(  j\right)  }\right)
,\ \ j=1,\ldots,k,\ i=1,\ldots,n-1. \label{aqq}%
\end{equation}

\end{theorem}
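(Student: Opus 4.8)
The plan is to promote the block-shift realization (\ref{mb}) of the simple case to the semisimple one by grafting onto it the binary Wedderburn--Artin form (\ref{ma}), exactly as the preceding discussion of (\ref{vd})--(\ref{rh2}) indicates. I would start from the cyclic homomorphism description (\ref{rh}) of a simple $[2,n]$-ring, $\mathcal{R}_{simple}^{[2,n]} \cong \bigoplus_{i} \operatorname{Hom}_{\mathcal{D}}\left(V_i, V_{i+1}\right)$ with indices read cyclically, and refine it by splitting each polyadic vector space into its isotypic components $V_i(d_i) = \bigoplus_{j=1}^{k} W_i^{(j)}(q_i^{(j)}\mid \mathcal{D}^{(j)})$ as in (\ref{vw}). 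The first genuine step is then to show that only the ``diagonal'' homomorphisms survive: since distinct simple components are non-isomorphic and are modules over different division rings $\mathcal{D}^{(j)}$, a Schur-lemma argument forces $\operatorname{Hom}(W_i^{(j)}, W_{i+1}^{(j')}) = 0$ for $j \neq j'$, so each $\operatorname{Hom}_{\mathcal{D}}(V_i,V_{i+1})$ collapses to $\bigoplus_{j} \operatorname{Hom}_{\mathcal{D}^{(j)}}(W_i^{(j)}, W_{i+1}^{(j)})$. Reindexing the resulting double sum over the cyclic index $i$ and the component index $j$ reproduces precisely (\ref{rh2}).

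The step I expect to be the main obstacle is pinning down the dimension constraints that make this double grading consistent. Each shift-block $\mathrm{B}_i$ must simultaneously (i) carry the binary semisimple Wedderburn form (\ref{ma}), i.e.\ be block-diagonal with square diagonal blocks, and (ii) close under the cyclic $n$-ary product $\mathrm{A}_1^{(j)}\mathrm{A}_2^{(j)}\cdots\mathrm{A}_{n-1}^{(j)}$ running around $W_1^{(j)} \to W_2^{(j)} \to \cdots \to W_1^{(j)}$. Requirement (i) forces each $\mathrm{B}_i$ to be square, hence $d_1 = \cdots = d_{n-1} =: p$, and within a component it forces $q_i^{(j)}$ to be independent of $i$, so that $q^{(j)} \equiv q_1^{(j)} = \cdots = q_{n-1}^{(j)}$ with $p = \sum_{j} q^{(j)}$ and $d = (n-1)p$. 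The care here is to verify that this uniform choice is the one that makes the cyclic composition in (ii) return an endomorphism of each $W_1^{(j)}$ of the correct size, so that the two decompositions are genuinely compatible; this compatibility is what singles out the ``first kind''.

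It then remains to fix a basis adapted to the double grading (matrix units respecting both the shift and the diagonal block structure) so that $\nu^{[2]}$ becomes ordinary matrix addition and $\mu^{[n]}$ the ordinary product of $n$ matrices; this turns (\ref{rh2}) into the explicit matrix form (\ref{n1})--(\ref{n2}). Closure and the polyadic ring axioms then follow without further work: the Assertion on the closure of $n$-ary matrices gives closure of the block-shift pattern (\ref{n1}) under $\mu^{[n]}$ and $\nu^{[2]}$, the binary semisimple Theorem gives closure of the block-diagonal blocks (\ref{n2}), and distributivity is inherited entrywise, establishing the isomorphism $\mathcal{R}_{semispl}^{[2,n]} \cong Mat_{d\times d}^{shift\text{-}diag(n,k)}(\mathcal{D})$.
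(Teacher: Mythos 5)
Your proposal is correct and follows essentially the same route as the paper's own derivation: refining the cyclic direct sum of homomorphisms for the simple case by the semisimple splitting (\ref{vw}) of each $V_{i}\left(  d_{i}\right)  $ to arrive at (\ref{rh2}), imposing the dimension-compatibility constraints $p=d_{1}=\cdots=d_{n-1}$ and $q^{\left(  j\right)  }\equiv q_{1}^{\left(  j\right)  }=\cdots=q_{n-1}^{\left(  j\right)  }$, and then choosing a suitable basis to obtain the manifest form (\ref{n1})--(\ref{n2}) with closure and distributivity inherited from the block structure. Your Schur-lemma argument for why only the diagonal ($j=j'$) homomorphisms survive usefully makes explicit a step the paper leaves implicit when it writes down (\ref{rh2}), and your remark that this compatibility is precisely what singles out the first kind matches the paper's intent.
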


\begin{remark}
By analogy with (\ref{m2}), in the limiting case $n=2$, we have in (\ref{n1})
one block $\mathrm{B}_{1}^{\left(  k\right)  }\left(  p\times p\right)  $ only
and (\ref{n2}) gives its standard (binary) semisimple ring decomposition.
\end{remark}

This allows us to introduce another possible double decomposition in the
opposite sequence to (\ref{n1})-(\ref{n2}), we call it \textsl{of second kind}
or \textsl{reverse}, or \textsl{diagonal-shift}. Indeed, in a suitable basis
we first provide the standard block-diagonal decomposition (\ref{ma}), and
then each block obeys the block-shift decomposition (\ref{mb}). Here we do not
write the \textquotedblleft reverse\textquotedblright\ analog of (\ref{rh2})
and arrive directly to

\begin{theorem}
The semisimple polyadic Artinian ring $\mathcal{\hat{R}}_{semispl}^{\left[
2,n\right]  }$ (of the second kind) is isomorphic to the $d\times d$ matrix
ring%
\begin{equation}
\mathcal{\hat{R}}_{semispl}^{\left[  2,n\right]  }\cong Mat_{d\times
d}^{diag\text{-}shift\left(  n,k\right)  }\left(  \mathcal{D}\right)
=\left\langle \left\{  \mathrm{\hat{N}}^{diag\text{-}shift\left(  n,k\right)
}\left(  d\times d\right)  \right\}  \mid\nu^{\left[  2\right]  },\mu^{\left[
n\right]  }\right\rangle ,
\end{equation}
where $\nu^{\left[  2\right]  },\mu^{\left[  n\right]  }$ are binary addition
and ordinary product of $n$ matrices, $\mathrm{\hat{N}}_{d\times
d}^{diag\text{-}shift\left(  n,k\right)  }$ ($n$ is arity of $\mathrm{\hat{N}%
}$'s and $k$ is number of simple components of $\mathrm{\hat{N}}$'s) are the
block-diagonal $n$-ary matrices with block-shift nonsquare blocks%
\begin{align}
&  \mathrm{\hat{N}}^{diag\text{-}shift\left(  n,k\right)  }\left(  d\times
d\right)  =\left(
\begin{array}
[c]{cccc}%
\mathrm{\hat{A}}^{\left(  1\right)  }\left(  q^{\left(  1\right)  }\times
q^{\left(  1\right)  }\right)  & 0 & \ldots & 0\\
0 & \mathrm{\hat{A}}^{\left(  2\right)  }\left(  q^{\left(  2\right)  }\times
q^{\left(  2\right)  }\right)  & \ddots & \vdots\\
\vdots & \vdots & \ddots & 0\\
0 & 0 & \ldots & \mathrm{\hat{A}}^{\left(  k\right)  }\left(  q^{\left(
k\right)  }\times q^{\left(  k\right)  }\right)
\end{array}
\right)  ,\label{nn1}\\
&  \mathrm{\hat{A}}^{\left(  j\right)  }\left(  q^{\left(  j\right)  }\times
q^{\left(  j\right)  }\right) \nonumber\\
&  =\left(
\begin{array}
[c]{ccccc}%
0 & \mathrm{\hat{B}}_{1}^{\left(  j\right)  }\left(  p_{1}^{\left(  j\right)
}\times p_{2}^{\left(  j\right)  }\right)  & \ldots & 0 & 0\\
0 & 0 & \mathrm{\hat{B}}_{2}^{\left(  j\right)  }\left(  p_{2}^{\left(
j\right)  }\times p_{3}^{\left(  j\right)  }\right)  & \ldots & 0\\
0 & 0 & \ddots & \ddots & \vdots\\
\vdots & \vdots & \ddots & 0 & \mathrm{\hat{B}}_{n-2}^{\left(  j\right)
}\left(  p_{n-2}^{\left(  j\right)  }\times p_{n-1}^{\left(  j\right)
}\right) \\
\mathrm{\hat{B}}_{n-1}^{\left(  j\right)  }\left(  p_{n-1}^{\left(  j\right)
}\times p_{1}^{\left(  j\right)  }\right)  & 0 & \ldots & 0 & 0
\end{array}
\right)  , \label{nn2}%
\end{align}
where%
\begin{align}
q^{\left(  j\right)  }  &  =p_{1}^{\left(  j\right)  }+p_{2}^{\left(
j\right)  }+\ldots+p_{n-1}^{\left(  j\right)  },\\
d  &  =q^{\left(  1\right)  }+q^{\left(  2\right)  }+\ldots+q^{\left(
k\right)  },
\end{align}
and the $\left(  n-1\right)  k$ blocks $\mathrm{\hat{B}}$'s are nonsquare
matrices over the division rings $\mathcal{D}^{\left(  j\right)  }$%
\begin{equation}
\mathrm{\hat{B}}_{i}^{\left(  j\right)  }\left(  p_{i}^{\left(  j\right)
}\times p_{i+1}^{\left(  j\right)  }\right)  \in Mat_{p_{i}^{\left(  j\right)
}\times p_{i+1}^{\left(  j\right)  }}^{full}\left(  \mathcal{D}^{\left(
j\right)  }\right)  ,\ \ j=1,\ldots,k,\ i=1,\ldots,n-1. \label{bp}%
\end{equation}

\end{theorem}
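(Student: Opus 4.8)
The plan is to build the reverse decomposition directly from the two structural results already at our disposal: the Wedderburn--Artin theorem for semisimple Artinian $[2,n]$-rings \cite{nik84a}, and the structure theorem for simple polyadic rings expressed in (\ref{rmm})--(\ref{mb}). First I would invoke the former to write $\mathcal{\hat{R}}_{semispl}^{[2,n]}$ as a finite direct product of $k$ simple Artinian $[2,n]$-rings $\mathcal{R}_{simple}^{(j)}$, $j=1,\ldots,k$, each carrying its own division ring $\mathcal{D}^{(j)}$. This direct-product decomposition is exactly what the outer block-diagonal shape (\ref{nn1}) is meant to encode, the $j$-th summand occupying the $j$-th diagonal slot.

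The second step applies the simple case to each summand separately. By (\ref{rmm})--(\ref{mb}), every $\mathcal{R}_{simple}^{(j)}$ is isomorphic to a ring of block-shift $n$-ary matrices over $\mathcal{D}^{(j)}$; realizing this isomorphism furnishes precisely the inner block $\mathrm{\hat{A}}^{(j)}$ of the form (\ref{nn2}), with nonsquare sub-blocks $\mathrm{\hat{B}}_i^{(j)}$ of sizes $p_i^{(j)}\times p_{i+1}^{(j)}$ chained cyclically (with the convention $p_n^{(j)}:=p_1^{(j)}$), so that $q^{(j)}=p_1^{(j)}+\ldots+p_{n-1}^{(j)}$ as in (\ref{bp}). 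Because the summands are independent, the shift dimensions $p_i^{(j)}$ may be chosen freely within each $j$, which is exactly the extra freedom distinguishing this reverse (diagonal-shift) realization from the shift-diagonal one of the previous theorem, where the outer block-shift forced a common size across $i$. Assembling the $k$ blocks along the diagonal and setting $d=q^{(1)}+\ldots+q^{(k)}$ yields $\mathrm{\hat{N}}^{diag\text{-}shift(n,k)}(d\times d)$.

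It remains to check that these matrices form a polyadic ring carrying the correct operations, i.e. that the realization is a $[2,n]$-ring isomorphism. For binary addition this is immediate: block-diagonal matrices add diagonal-block-wise, and each diagonal block is a sum of block-shift matrices, which stays block-shift by the Assertion on the closure of block-shift $n$-ary matrices. For the $n$-ary multiplication one uses that the ordinary product of $n$ block-diagonal matrices is again block-diagonal, its $j$-th diagonal entry being the product of the $n$ corresponding blocks $\mathrm{\hat{A}}^{(j)}$; since each $\mathrm{\hat{A}}^{(j)}$ is block-shift, the same Assertion guarantees that this product is again a block-shift matrix of the same shape. Thus both operations are closed and agree, component by component, with the direct-product operations supplied by the Wedderburn--Artin decomposition, and distributivity is inherited from matrix algebra.

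The main obstacle is the verification of $n$-ary closure inside each diagonal block --- specifically, confirming that the cyclic chaining $p_1^{(j)}\to p_2^{(j)}\to\ldots\to p_{n-1}^{(j)}\to p_1^{(j)}$ of the nonsquare sizes is precisely what makes the product of exactly $n$ (and not fewer) block-shift factors nonzero and again block-shift, so that the induced operation is genuinely nonderived $n$-ary. This, however, is a block-indexed repetition of the argument already established in the Assertion and the simple case, carried out independently in each summand; once the bookkeeping of the sizes $p_i^{(j)}$ is arranged so that adjacent factors are conformable, no new phenomenon arises and the isomorphism follows.
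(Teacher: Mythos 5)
Your proposal is correct and takes essentially the same route as the paper: the paper's own (one-sentence) justification is precisely ``in a suitable basis, first the block-diagonal decomposition into simple components, then the block-shift form (\ref{mb}) inside each block,'' which is what you execute via the Wedderburn--Artin theorem for semisimple Artinian $\left[2,n\right]$-rings \cite{nik84a} followed by the simple-ring structure theorem (\ref{rmm})--(\ref{mb}) applied to each summand. Your closure checks and the observation that the sizes $p_{i}^{\left(j\right)}$ may vary independently within each diagonal block (the freedom distinguishing the second kind from the first) merely make explicit what the paper leaves implicit.
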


\begin{definition}
The ring obeying the double decomposition of the first kind (\ref{n1}%
)-(\ref{n2}) (of the second kind (\ref{nn1})-(\ref{nn2})) is called
\textsl{polyadic ring of the first kind} (resp. \textsl{of the second kind}).
\end{definition}

\begin{proposition}
The polyadic rings of the first and second kind are not isomorphic.
\end{proposition}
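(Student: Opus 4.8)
The plan is to produce an isomorphism invariant of a semisimple polyadic $\left[2,n\right]$-ring that takes different values on the two kinds, the natural candidate being the list of elementary block dimensions read off after splitting the ring into its simple summands.

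First I would reduce both constructions to their simple constituents. Permuting the $d\times d$ basis so as to index rows and columns first by the semisimple label $j$ and only then by the shift label $i$ turns the shift-diagonal matrix $(\ref{n1})$--$(\ref{n2})$ into a genuinely block-diagonal array of $k$ mutually annihilating blocks; its $j$-th block is exactly a simple block-shift ring of type $(\ref{rmm})$ over $\mathcal{D}^{(j)}$ whose $n-1$ nonzero blocks $\mathrm{A}_{i}^{(j)}$ are all square of the \emph{same} size $q^{(j)}$. The identical permutation applied to $(\ref{nn1})$--$(\ref{nn2})$ exhibits $\mathcal{\hat{R}}_{semispl}^{\left[2,n\right]}$ as a direct sum of $k$ simple block-shift rings whose $j$-th summand has blocks $\mathrm{\hat{B}}_{i}^{(j)}$ of the \emph{a priori unequal} sizes $p_{1}^{(j)},\ldots,p_{n-1}^{(j)}$. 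Thus both rings are direct sums of $k$ simple rings of type $(\ref{rmm})$ over the same division rings $\mathcal{D}^{(j)}$, and by the polyadic Wedderburn--Artin theorem for $\left[2,n\right]$-rings of \cite{nik84a} the decomposition into minimal two-sided ideals is unique, so any isomorphism between the two rings must match their simple summands one at a time.

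Second, I would attach to each simple block-shift summand a numerical invariant and compute it in the two cases. The block-shift shape $(\ref{mb})$ equips such a summand with a canonical $\mathbb{Z}_{n-1}$-grading $R=\bigoplus_{i}R_i$, with $R_i\cong Mat_{d_i\times d_{i+1}}(\mathcal{D})$, compatible with the operations: binary addition is homogeneous, and the $n$-ary product $\mu^{\left[n\right]}$ is nonzero only on tuples whose successive factors lie in consecutive degrees, returning to the starting degree. Because multiplication is supported only on consecutive degrees, any polyadic isomorphism must carry homogeneous components to homogeneous components and can only rotate the cyclic order; hence the multiset of graded dimensions $\{\dim_{\mathcal{D}}R_i\}=\{d_i d_{i+1}\}$, and with a little more work the cyclic tuple $(d_1,\ldots,d_{n-1})$ itself (recovered from the flattening ranks of the cyclic multilinear product $R_i\otimes R_{i+1}\otimes\cdots\to R_i$, equivalently from the dimensions of the simple modules of the polyadic vector space $(\ref{vd})$), is an isomorphism invariant of the summand.

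Finally I would compare: for $\mathcal{R}_{semispl}^{\left[2,n\right]}$ every simple summand has the constant dimension tuple $(q^{(j)},\ldots,q^{(j)})$, and hence constant graded dimensions $(q^{(j)})^{2}$, whereas for $\mathcal{\hat{R}}_{semispl}^{\left[2,n\right]}$ one may choose, say, $n=4$, $k=1$ and $(p_{1},p_{2},p_{3})=(1,1,2)$, giving graded dimensions $\{1,2,2\}$, which is not constant and therefore is not the graded-dimension multiset of any first-kind summand. Since the summand decomposition is unique and the invariant is preserved, the two rings cannot be isomorphic; the degenerate choice $p_{1}^{(j)}=\ldots=p_{n-1}^{(j)}$ merely recovers the first kind as a special case of the second, which is why the assertion is about the two families rather than a single fixed pair of parameters. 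The main obstacle is the invariance claim of the second step: showing rigorously that the $\mathbb{Z}_{n-1}$-grading is intrinsic (so isomorphisms can only rotate it) and that the individual $d_i$ are recoverable amounts to a polyadic Skolem--Noether/Wedderburn analysis of the simple rings $(\ref{rmm})$, and the uniqueness of the simple decomposition invoked in the first step must likewise be drawn from \cite{nik84a}.
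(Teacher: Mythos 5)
Your proposal is correct, and it takes a genuinely different — and substantially more rigorous — route than the paper. The paper's entire proof is two sentences: the claim is said to "follow from the manifest forms" (\ref{n1})--(\ref{n2}) and (\ref{nn1})--(\ref{nn2}), together with the remark that the $\mathrm{\hat{B}}$-matrices can in general be nonsquare (\ref{bp}). Your proof extracts the only part of that which can actually carry the argument (the nonsquare blocks) and builds a real obstruction around it: (i) reduction to simple summands via the uniqueness statement in the polyadic Wedderburn--Artin theorem of \cite{nik84a}; (ii) the cyclic tuple of block dimensions, read off the intrinsic $\mathbb{Z}_{n-1}$-grading of a simple block-shift ring, as an isomorphism invariant; (iii) a concrete choice such as $n=4$, $k=1$, $\left(p_{1},p_{2},p_{3}\right)=\left(1,1,2\right)$ whose invariant is non-constant and therefore cannot match any first-kind summand, where the tuple is forced to be constant. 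What your route buys is correctness, not just rigor: a difference of "manifest forms" is no obstruction to isomorphism, and your closing observation that the equal-block case $p_{1}^{\left(j\right)}=\ldots=p_{n-1}^{\left(j\right)}$ of the second kind is carried onto the first kind by a basis permutation shows the paper overstates its claim. Indeed, in the paper's own Example (ternary, $k=2$, all blocks square $q\times q$), conjugating by the permutation that interchanges the second and third block rows and columns maps the kind-I matrices of (\ref{nn}) bijectively onto the kind-II matrices and turns the relations (\ref{a1})--(\ref{a2}) exactly into (\ref{aa1})--(\ref{aa2}) under $\hat{A}_{1}=A_{1}$, $\hat{A}_{2}=B_{1}$, $\hat{B}_{1}=A_{2}$, $\hat{B}_{2}=B_{2}$; so that particular pair \emph{is} ternary isomorphic, contrary to what the paper asserts, and the proposition survives only in the form your proof gives it: the second-kind family properly contains the first-kind family up to isomorphism, the excess being precisely the nonsquare-block cases your invariant detects. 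The one dependency you lean on — that the grading is intrinsic, so the cyclic dimension tuple is preserved by any $\left[2,n\right]$-ring isomorphism — you flag honestly, and it is legitimately the uniqueness half of the classification in \cite{nik84a} rather than something you need to reprove.
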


\begin{proof}
This follows from the manifest forms (\ref{n1})-(\ref{n2}) and (\ref{nn1}%
)-(\ref{nn2}). Also, in general case, the $\mathrm{\hat{B}}$-matrices can be
nonsquare (\ref{bp}).
\end{proof}

Thus the two double decompositions introduced above can lead to a new
classification for polyadic analogs of semisimple rings.

\begin{example}
Let us consider the double decomposition of two kinds for ternary ($n=3$)
rings with two semisimple components ($k=2$) and blocks as full $q\times q$
matrix rings over $\mathbb{C}$. Indeed, we have for the ternary nonderived
rings $\mathcal{R}_{semispl}^{\left[  2,3\right]  }$ and $\mathcal{\hat{R}%
}_{semispl}^{\left[  2,3\right]  }$ of the first and second kind,
respectively, the following block structures%
\begin{align}
\mathrm{N}^{shift\text{-}diag\left(  3,2\right)  }\left(  4q\times4q\right)
&  =\left(
\begin{array}
[c]{cccc}%
0 & 0 & A_{1} & 0\\
0 & 0 & 0 & A_{2}\\
B_{1} & 0 & 0 & 0\\
0 & B_{2} & 0 & 0
\end{array}
\right)  ,\ \nonumber\\
\mathrm{\hat{N}}^{diag\text{-}shift\left(  3,2\right)  }\left(  4q\times
4q\right)   &  =\left(
\begin{array}
[c]{cccc}%
0 & \hat{A}_{1} & 0 & 0\\
\hat{A}_{2} & 0 & 0 & 0\\
0 & 0 & 0 & \hat{B}_{1}\\
0 & 0 & \hat{B}_{2} & 0
\end{array}
\right)  , \label{nn}%
\end{align}
where $A_{i},B_{i},\hat{A}_{i},\hat{B}_{i}\in Mat_{q\times q}^{full}\left(
\mathbb{C}\right)  $. In terms of component blocks, the ternary
multiplications in the rings $\mathcal{R}_{semispl}^{\left[  2,3\right]  }$
and $\mathcal{\hat{R}}_{semispl}^{\left[  2,3\right]  }$ are

kind I:%
\begin{align}
A_{1}^{\prime}B_{1}^{\prime\prime}A_{1}^{\prime\prime\prime}  &
=A_{1},\ \ \ A_{2}^{\prime}B_{2}^{\prime\prime}A_{2}^{\prime\prime\prime
}=A_{2},\label{a1}\\
B_{1}^{\prime}A_{1}^{\prime\prime}B_{1}^{\prime\prime\prime}  &
=B_{1},\ \ \ B_{2}^{\prime}A_{2}^{\prime\prime}B_{2}^{\prime\prime\prime
}=B_{2}. \label{a2}%
\end{align}

kind II:%
\begin{align}
\hat{A}_{1}^{\prime}\hat{A}_{2}^{\prime\prime}\hat{A}_{1}^{\prime\prime
\prime}  &  =\hat{A}_{1},\ \ \ \hat{A}_{2}^{\prime}\hat{A}_{1}^{\prime\prime
}\hat{A}_{2}^{\prime\prime\prime}=\hat{A}_{2},\label{aa1}\\
\hat{B}_{1}^{\prime}\hat{B}_{2}^{\prime\prime}\hat{B}_{1}^{\prime\prime
\prime}  &  =\hat{B}_{1},\ \ \ \hat{B}_{2}^{\prime}\hat{B}_{1}^{\prime\prime
}\hat{B}_{2}^{\prime\prime\prime}=\hat{B}_{2}. \label{aa2}%
\end{align}

It follows from (\ref{a1})--(\ref{a2}) and (\ref{aa1})--(\ref{aa2}) that
$\mathcal{R}_{semispl}^{\left[  2,3\right]  }$ and $\mathcal{\hat{R}%
}_{semispl}^{\left[  2,3\right]  }$ are not ternary isomorphic.
\end{example}

Note that the sum of the block structures (\ref{nn}) obeys nontrivial properties.

\begin{remark}
Consider a binary sum of the block matrices of the first and second kind
(\ref{nn})%
\begin{equation}
\mathrm{P}^{\left(  3,2\right)  }\left(  4q\times4q\right)  =\mathrm{N}%
^{shift\text{-}diag\left(  3,2\right)  }\left(  4q\times4q\right)
+\mathrm{\hat{N}}^{diag\text{-}shift\left(  3,2\right)  }\left(
4q\times4q\right)  =\left(
\begin{array}
[c]{cccc}%
0 & \hat{A}_{1} & A_{1} & 0\\
\hat{A}_{2} & 0 & 0 & A_{2}\\
B_{1} & 0 & 0 & \hat{B}_{1}\\
0 & B_{2} & \hat{B}_{2} & 0
\end{array}
\right)  . \label{p}%
\end{equation}
The set of matrices (\ref{p}) forms the nonderived $\left[  2,3\right]  $-ring
$\mathcal{P}^{\left[  2,3\right]  }$ over $\mathbb{C}$%
\begin{equation}
\mathcal{P}^{\left[  2,3\right]  }=\left\langle \left\{  \mathrm{P}^{\left(
3,2\right)  }\left(  4q\times4q\right)  \right\}  \mid\nu^{\left[  2\right]
},\mu^{\left[  3\right]  }\right\rangle ,
\end{equation}
where $\nu^{\left[  2\right]  },\mu^{\left[  3\right]  }$ are binary addition
and ordinary product of $3$ matrices (\ref{p}).

Notice that the $\mathrm{P}$-matrices (\ref{p}) are the block-matrix version
of the circle matrices $\mathsf{M}_{circ}$ which were studied in
\cite{dup/vog2021b} in connection with $8$-vertex solutions to the constant
Yang-Baxter equation \cite{lam/rad} and the corresponding braiding quantum
gates \cite{kau/lom2004,mel/mir/mor}.
\end{remark}

\subsubsection{Supersymmetric double decomposition}

Let us generalize the above double decomposition (of the first kind) to
superrings and superalgebras. For that we first assume that the constituent
vector spaces (entering in (\ref{rh2})) are super vector spaces ($\mathbb{Z}%
_{2}$-graded vector spaces) obeying the standard decomposition into even and
odd parts%
\begin{equation}
W_{i}^{\left(  j\right)  }\left(  q^{\left(  j\right)  }\mid\mathcal{D}%
^{\left(  j\right)  }\right)  =W_{i}^{\left(  j\right)  }\left(
q_{even}^{\left(  j\right)  }\mid\mathcal{D}^{\left(  j\right)  }\right)
_{even}\oplus W_{i}^{\left(  j\right)  }\left(  q_{odd}^{\left(  j\right)
}\mid\mathcal{D}^{\left(  j\right)  }\right)  _{odd},\ i=1,\ldots
,n-1,\ j=1,\ldots,k, \label{ws}%
\end{equation}
where $q_{even}^{\left(  j\right)  }$ and $q_{odd}^{\left(  j\right)  }$ are
dimensions of the even and odd spaces,respectively, $q^{\left(  j\right)
}=q_{even}^{\left(  j\right)  }+q_{odd}^{\left(  j\right)  }$.

The parity of a homogeneous element of the vector space $v\in W_{i}^{\left(
j\right)  }\left(  q^{\left(  j\right)  }\mid\mathcal{D}^{\left(  j\right)
}\right)  $ is defined by $\left\vert v\right\vert =\bar{0}$ (resp. $\bar{1}%
$), if $v\in W_{i}^{\left(  j\right)  }\left(  q_{even}^{\left(  j\right)
}\mid\mathcal{D}^{\left(  j\right)  }\right)  _{even}$ (resp. $W_{i}^{\left(
j\right)  }\left(  q_{odd}^{\left(  j\right)  }\mid\mathcal{D}^{\left(
j\right)  }\right)  _{odd}$), and $\bar{0},\bar{1}\in\mathbb{Z}_{2}$. For
details, see \cite{berezin,leites}. In the graded case, the $k$ square blocks
$\mathrm{A}$'s in (\ref{aqq}) are full supermatrix rings of the size $\left(
q_{even}^{\left(  j\right)  }\mid q_{odd}^{\left(  j\right)  }\right)
\times\left(  q_{even}^{\left(  j\right)  }\mid q_{odd}^{\left(  j\right)
}\right)  $, while the square $\mathrm{B}$'s (\ref{n2}) are block-diagonal
supermatrices, and the block-shift $n$-ary supermatrices have a nonstandard
form (\ref{n1}).

We assume that in super case a polyadic analog of the Wedderburn-Artin theorem
for semisimple Artinian superrings (of the first kind) is also valid, with the
form of the double decomposition (\ref{n1})--(\ref{n2}) being the same,
however now the blocks $\mathrm{A}$'s and $\mathrm{B}$'s are corresponding supermatrices.

\section{\textsc{Polyadization concept}}

Here we propose a general procedure for how to construct new polyadic
algebraic structures from binary (or lower arity) ones, using the
\textquotedblleft inverse\textquotedblright\ (informally) to the block-shift
matrix decomposition (\ref{mb}). It can be considered as a polyadic analog of
the inverse problem of the determination of an algebraic structure from the
knowledge of its Wedderburn decomposition \cite{die/mit}.

\subsection{Polyadization of binary algebraic structures}

Let a binary algebraic structure $\mathcal{X}$ be represented by $p\times p$
matrices $\mathrm{B}_{\mathbf{y}}\equiv\mathrm{B}_{\mathbf{y}}\left(  p\times
p\right)  $ over a ring $\mathcal{R}$ (a linear representation), where
$\mathbf{y}$ is the set of $N_{y}$ parameters corresponding to an element $x$
of $\mathcal{X}$. Because the binary addition in $\mathcal{R}$ transfers to
the matrix addition without restrictions (as opposed to the polyadic case, see
below), we will consider only the multiplicative part of the resulting
polyadic matrix ring. In this way, we propose a special block-shift matrix
method to obtain $n$-ary semigroups ($n$-ary groups) from the binary ones, but
the former are not derived from the latter \cite{galmak1,duplij2022}. In
general, this can lead to new algebraic structures that were not known before.

\begin{definition}
\label{def-pol}A (\textit{block-matrix})\textit{ polyadization} $\mathbf{\Phi
}_{pol}$ of a binary semigroup (or group) $\mathcal{X}$ represented by square
$p\times p$ matrices $\mathrm{B}_{\mathbf{y}}$ is an $n$-ary semigroup (or an
$n$-ary group) represented by the $d\times d$ block-shift matrices (over a
ring $\mathcal{R}$) of the form (\ref{mb}) as follows%
\begin{equation}
\mathrm{Q}_{\mathbf{y}_{1},\ldots,\mathbf{y}_{n-1}}\equiv\mathrm{Q}%
_{\mathbf{y}_{1},\ldots,\mathbf{y}_{n-1}}^{Bshift\left(  n\right)  }\left(
d\times d\right)  =\left(
\begin{array}
[c]{ccccc}%
0 & \mathrm{B}_{\mathbf{y}_{1}} & \ldots & 0 & 0\\
0 & 0 & \mathrm{B}_{\mathbf{y}_{2}} & \ldots & 0\\
0 & 0 & \ddots & \ddots & \vdots\\
\vdots & \vdots & \ddots & 0 & \mathrm{B}_{\mathbf{y}_{n-2}}\\
\mathrm{B}_{\mathbf{y}_{n-1}} & 0 & \ldots & 0 & 0
\end{array}
\right)  , \label{q}%
\end{equation}
where $d=\left(  n-1\right)  p$, and the $n$-ary multiplication $\mu^{\left[
\left[  n\right]  \right]  }$ is given by the product of $n$ matrices (\ref{q}).
\end{definition}

In terms of the block-matrices $\mathrm{B}$'s the multiplication%
\begin{equation}
\mu^{\left[  \left[  n\right]  \right]  }\left[  \overset{n}{\overbrace
{\mathrm{Q}_{\mathbf{y}_{1}^{\prime},\ldots,\mathbf{y}_{n-1}^{\prime}%
},\mathrm{Q}_{\mathbf{y}_{1}^{\prime\prime},\ldots,\mathbf{y}_{n-1}%
^{\prime\prime}},\ldots,\mathrm{Q}_{\mathbf{y}_{1}^{\prime\prime\prime}%
,\ldots,\mathbf{y}_{n-1}^{\prime\prime\prime}}\mathrm{Q}_{\mathbf{y}%
_{1}^{\prime\prime\prime\prime},\ldots,\mathbf{y}_{n-1}^{\prime\prime
\prime\prime}}}}\right]  =\mathrm{Q}_{\mathbf{y}_{1},\ldots,\mathbf{y}_{n-1}}
\label{mq}%
\end{equation}
has the cyclic product form (see \cite{dup2021b})%
\begin{align}
\overset{n}{\overbrace{\mathrm{B}_{\mathbf{y}_{1}^{\prime}}\mathrm{B}%
_{\mathbf{y}_{2}^{\prime\prime}}\ldots\mathrm{B}_{\mathbf{y}_{n-1}%
^{\prime\prime\prime}}\mathrm{B}_{\mathbf{y}_{1}^{\prime\prime\prime\prime}}%
}}  &  =\mathrm{B}_{\mathbf{y}_{1}},\label{b1}\\
\mathrm{B}_{\mathbf{y}_{2}^{\prime}}\mathrm{B}_{\mathbf{y}_{3}^{\prime\prime}%
}\ldots\mathrm{B}_{\mathbf{y}_{1}^{\prime\prime\prime}}\mathrm{B}%
_{\mathbf{y}_{2}^{\prime\prime\prime\prime}}  &  =\mathrm{B}_{\mathbf{y}_{2}%
},\\
&  \vdots\nonumber\\
\mathrm{B}_{\mathbf{y}_{n-1}^{\prime}}\mathrm{B}_{\mathbf{y}_{1}^{\prime
\prime}}\ldots\mathrm{B}_{\mathbf{y}_{n-2}^{\prime\prime\prime}}%
\mathrm{B}_{\mathbf{y}_{n-1}^{\prime\prime\prime\prime}}  &  =\mathrm{B}%
_{\mathbf{y}_{n-1}}. \label{b2}%
\end{align}

\begin{remark}
The number of parameters $N_{y}$ describing an element $x\in\mathcal{X}$
increases to $\left(  n-1\right)  N_{y}$, and the corresponding algebraic
structure $\left\langle \left\{  \mathrm{Q}_{\mathbf{y}_{1},\ldots
,\mathbf{y}_{n-1}}\right\}  \mid\mu^{\left[  \left[  n\right]  \right]
}\right\rangle $ becomes $n$-ary, and so (\ref{q}) can be treated as a new
algebraic structure, which we denote by the same letter with the arities in
double square brackets $\mathcal{X}^{\left[  \left[  n\right]  \right]  }$.
\end{remark}

We now analyze some of the most general properties of the polyadization map
$\mathbf{\Phi}_{pol}$ which are independent of the concrete form of the
block-matrices $\mathrm{B}$'s and over which algebraic structure (ring, field,
etc...) they are defined. We then present some concrete examples.

\begin{definition}
A \textit{unique polyadization} $\mathbf{\Phi}_{Upol}$ is a polyadization
where all sets of parameters coincide%
\begin{equation}
\mathbf{y}=\mathbf{y}_{1}=\mathbf{y}_{2}\ldots=\mathbf{y}_{n-1}. \label{y}%
\end{equation}

\end{definition}

\begin{proposition}
The unique polyadization is an $n$-ary-binary homomorphism.
\end{proposition}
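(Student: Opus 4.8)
The plan is to verify the single defining identity of an $n$-ary-binary homomorphism, namely that $\mathbf{\Phi}_{Upol}$ intertwines the iterated binary product of $\mathcal{X}$ with the $n$-ary product $\mu^{\left[  \left[  n\right]  \right]  }$ of $\mathcal{X}^{\left[  \left[  n\right]  \right]  }$:
\[
\mu^{\left[  \left[  n\right]  \right]  }\left[  \mathbf{\Phi}_{Upol}\left(  x_{1}\right)  ,\ldots,\mathbf{\Phi}_{Upol}\left(  x_{n}\right)  \right]  =\mathbf{\Phi}_{Upol}\left(  x_{1}x_{2}\cdots x_{n}\right)  ,\qquad x_{1},\ldots,x_{n}\in\mathcal{X},
\]
where on the right $x_{1}x_{2}\cdots x_{n}$ is the product in the binary semigroup $\mathcal{X}$, well defined by associativity. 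Let $\mathrm{C}_{s}=\mathrm{B}_{\mathbf{y}}$ denote the $p\times p$ matrix representing $x_{s}$, so that by the uniqueness condition (\ref{y}) the image $\mathbf{\Phi}_{Upol}\left(  x_{s}\right)  $ is the block-shift matrix (\ref{q}) with \emph{every} nonzero block equal to the single matrix $\mathrm{C}_{s}$.

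First I would substitute this uniformity into the cyclic product formulas (\ref{b1})--(\ref{b2}) that govern $\mu^{\left[  \left[  n\right]  \right]  }$. In the general polyadization the $s$-th factor contributes the block $\mathrm{B}_{\mathbf{y}_{i}}$ selected by a cyclically shifting lower index $i$; but once all blocks of the $s$-th factor coincide with $\mathrm{C}_{s}$, the value of that index is immaterial. Consequently every one of the $n-1$ cyclic products on the left-hand sides of (\ref{b1})--(\ref{b2}) collapses to one and the same ordinary matrix product $\mathrm{C}_{1}\mathrm{C}_{2}\cdots\mathrm{C}_{n}$, the cyclic relabelling of factors having become vacuous.

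Next I would use that the assignment $x\mapsto\mathrm{B}_{\mathbf{y}}$ is a (linear) representation of $\mathcal{X}$, whence $\mathrm{C}_{1}\mathrm{C}_{2}\cdots\mathrm{C}_{n}$ is exactly the matrix representing $x_{1}x_{2}\cdots x_{n}$. Since all $n-1$ output blocks equal this single matrix, the result of $\mu^{\left[  \left[  n\right]  \right]  }$ is again a block-shift matrix of the uniform form (\ref{q}), i.e. precisely $\mathbf{\Phi}_{Upol}\left(  x_{1}x_{2}\cdots x_{n}\right)  $. In particular this shows \emph{en passant} that the image of $\mathbf{\Phi}_{Upol}$ is closed under $\mu^{\left[  \left[  n\right]  \right]  }$ and forms an $n$-ary subsemigroup, so that the displayed identity is meaningful and establishes the homomorphism property.

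I expect the only delicate point to be the collapse of the cyclic index shifts in (\ref{b1})--(\ref{b2}): one must check carefully that under (\ref{y}) the differently shifted products in the distinct rows genuinely coincide, so that a single output parameter is obtained and the image does not spill into a non-unique polyadization. Everything else is bookkeeping, with associativity of $\mathcal{X}$ ensuring that $x_{1}x_{2}\cdots x_{n}$ is unambiguous and that the representation property may be applied factor by factor.
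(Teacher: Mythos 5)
Your proof is correct and takes essentially the same route as the paper's: the decisive step in both is that under the uniqueness condition (\ref{y}) the $\left(n-1\right)$ cyclic relations (\ref{b1})--(\ref{b2}) collapse to the single ordinary product of $n$ matrices (the paper's (\ref{bb})), which is precisely the $n$-ary-binary homomorphism property (\ref{qq}). Your extra remarks --- phrasing the identity at the level of elements of $\mathcal{X}$ via the representation, and noting that the image of $\mathbf{\Phi}_{Upol}$ is closed under $\mu^{\left[\left[n\right]\right]}$ --- merely make explicit what the paper leaves implicit.
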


\begin{proof}
In the case of (\ref{y}) all $\left(  n-1\right)  $ relations (\ref{b1}%
)--(\ref{b2}) coincide%
\begin{equation}
\overset{n}{\overbrace{\mathrm{B}_{\mathbf{y}^{\prime}}\mathrm{B}%
_{\mathbf{y}^{\prime\prime}}\ldots\mathrm{B}_{\mathbf{y}^{\prime\prime\prime}%
}\mathrm{B}_{\mathbf{y}^{\prime\prime\prime\prime}}}}=\mathrm{B}_{\mathbf{y}},
\label{bb}%
\end{equation}
which means that the ordinary (binary) product of $n$ matrices $\mathrm{B}%
_{\mathbf{y}}$'s is mapped to the $n$-ary product of matrices $\mathrm{Q}%
_{\mathbf{y}}$'s (\ref{mq})%
\begin{equation}
\mu^{\left[  \left[  n\right]  \right]  }\left[  \overset{n}{\overbrace
{\mathrm{Q}_{\mathbf{y}^{\prime}},\mathrm{Q}_{\mathbf{y}^{\prime\prime}%
},\ldots,\mathrm{Q}_{\mathbf{y}^{\prime\prime\prime}}\mathrm{Q}_{\mathbf{y}%
^{\prime\prime\prime\prime}}}}\right]  =\mathrm{Q}_{\mathbf{y}}, \label{qq}%
\end{equation}
as it should be for an $n$-ary-binary homomorphism, but not for a homomorphism.
\end{proof}

\begin{assertion}
If matrices $\mathrm{B}_{\mathbf{y}}\equiv\mathrm{B}_{\mathbf{y}}\left(
p\times p\right)  $ contain the identity matrix $\mathrm{E}_{p}$, then the
$n$-ary identity $\mathrm{E}_{d}^{\left(  n\right)  }$ in $\left\langle
\left\{  \mathrm{Q}_{\mathbf{y}}\left(  d\times d\right)  \right\}  \mid
\mu^{\left[  \left[  n\right]  \right]  }\right\rangle $, $d=\left(
n-1\right)  p$ has the form%
\begin{equation}
\mathrm{E}_{d}^{\left(  n\right)  }=\left(
\begin{array}
[c]{ccccc}%
0 & \mathrm{E}_{p} & \ldots & 0 & 0\\
0 & 0 & \mathrm{E}_{p} & \ldots & 0\\
0 & 0 & \ddots & \ddots & \vdots\\
\vdots & \vdots & \ddots & 0 & \mathrm{E}_{p}\\
\mathrm{E}_{p} & 0 & \ldots & 0 & 0
\end{array}
\right)  . \label{e}%
\end{equation}

\end{assertion}

\begin{proof}
It follows from (\ref{q}), (\ref{mq}) and (\ref{bb}).
\end{proof}

In this case the unique polyadization maps the identity matrix to the $n$-ary
identity $\mathbf{\Phi}_{Upol}:\mathrm{E}_{p}\rightarrow\mathrm{E}%
_{d}^{\left(  n\right)  }$.

\begin{assertion}
If the matrices $\mathrm{B}_{\mathbf{y}}$ are invertible $\mathrm{B}%
_{\mathbf{y}}\mathrm{B}_{\mathbf{y}}^{-1}=\mathrm{B}_{\mathbf{y}}%
^{-1}\mathrm{B}_{\mathbf{y}}=\mathrm{E}_{p}$, then each $\mathrm{Q}%
_{\mathbf{y}_{1},\ldots,\mathbf{y}_{n-1}}$ has a querelement%
\begin{equation}
\overline{\mathrm{Q}}_{\mathbf{y}_{1},\ldots,\mathbf{y}_{n-1}}=\left(
\begin{array}
[c]{ccccc}%
0 & \overline{\mathrm{B}}_{\mathbf{y}_{1}} & \ldots & 0 & 0\\
0 & 0 & \overline{\mathrm{B}}_{\mathbf{y}_{2}} & \ldots & 0\\
0 & 0 & \ddots & \ddots & \vdots\\
\vdots & \vdots & \ddots & 0 & \overline{\mathrm{B}}_{\mathbf{y}_{n-2}}\\
\overline{\mathrm{B}}_{\mathbf{y}_{n-1}} & 0 & \ldots & 0 & 0
\end{array}
\right)  , \label{qb}%
\end{equation}
satisfying%
\begin{equation}
\mu^{\left[  \left[  n\right]  \right]  }\left[  \overset{n}{\overbrace
{\mathrm{Q}_{\mathbf{y}_{1},\ldots,\mathbf{y}_{n-1}},\mathrm{Q}_{\mathbf{y}%
_{1},\ldots,\mathbf{y}_{n-1}},\ldots,\mathrm{Q}_{\mathbf{y}_{1},\ldots
,\mathbf{y}_{n-1}}\overline{\mathrm{Q}}_{\mathbf{y}_{1},\ldots,\mathbf{y}%
_{n-1}}}}\right]  =\mathrm{Q}_{\mathbf{y}_{1},\ldots,\mathbf{y}_{n-1}}
\label{mqq}%
\end{equation}
where $\overline{\mathrm{Q}}_{\mathbf{y}_{1},\ldots,\mathbf{y}_{n-1}}$ can be
on any places and%
\begin{equation}
\overline{\mathrm{B}}_{\mathbf{y}_{i}}=\mathrm{B}_{\mathbf{y}_{i-1}}%
^{-1}\mathrm{B}_{\mathbf{y}_{i-2}}^{-1}\ldots\mathrm{B}_{\mathbf{y}_{2}}%
^{-1}\mathrm{B}_{\mathbf{y}_{1}}^{-1}\mathrm{B}_{\mathbf{y}_{n-1}}%
^{-1}\mathrm{B}_{\mathbf{y}_{n-2}}^{-1}\ldots\mathrm{B}_{\mathbf{y}_{i+2}%
}^{-1}\mathrm{B}_{\mathbf{y}_{i+1}}^{-1}. \label{bb1}%
\end{equation}

\end{assertion}

\begin{proof}
This follows from (\ref{qb})--(\ref{mqq}) and (\ref{b1})--(\ref{b2}), then
consequently applying $\mathrm{B}_{\mathbf{y}_{i}}^{-1}$ (with suitable
indices) on both sides, we obtain (\ref{bb1}).
\end{proof}

Let us suppose that on the set of matrices $\left\{  \mathrm{B}_{\mathbf{y}%
}\right\}  $ over a binary ring $\mathcal{R}$ one can consider some analog of
a multiplicative character $\chi:\left\{  \mathrm{B}_{\mathbf{y}}\right\}
\rightarrow\mathcal{R}$, being a (binary) homomorphism, such that%
\begin{equation}
\chi\left(  \mathrm{B}_{\mathbf{y}_{1}}\right)  \chi\left(  \mathrm{B}%
_{\mathbf{y}_{2}}\right)  =\chi\left(  \mathrm{B}_{\mathbf{y}_{1}}%
\mathrm{B}_{\mathbf{y}_{2}}\right)  . \label{xb}%
\end{equation}

For instance, in case $\mathrm{B}\in GL\left(  p,\mathbb{C}\right)  $, the
determinant can be considered as a (binary) multiplicative character.
Similarly, we can introduce

\begin{definition}
A \textit{polyadized multiplicative character} $\mathbf{\chi}:\left\{
\mathrm{Q}_{\mathbf{y}_{1},\ldots,\mathbf{y}_{n-1}}\right\}  \rightarrow
\mathcal{R}$ is proportional to a product of the binary multiplicative
characters of the blocks $\chi\left(  \mathrm{B}_{\mathbf{y}_{i}}\right)  $%
\begin{equation}
\mathbf{\chi}\left(  \mathrm{Q}_{\mathbf{y}_{1},\ldots,\mathbf{y}_{n-1}%
}\right)  =\left(  -1\right)  ^{n}\chi\left(  \mathrm{B}_{\mathbf{y}_{1}%
}\right)  \chi\left(  \mathrm{B}_{\mathbf{y}_{2}}\right)  \ldots\chi\left(
\mathrm{B}_{\mathbf{y}_{n-1}}\right)  . \label{xq}%
\end{equation}

\end{definition}

The normalization factor $\left(  -1\right)  ^{n}$ in (\ref{xq}) is needed to
be consistent with the case when $\mathcal{R}$ is commutative, and the
multiplicative characters are determinants. It can also be consistent in other cases.

\begin{proposition}
If the ring $\mathcal{R}$ is commutative, then the polyadized multiplicative
character $\mathbf{\chi}$ is an $n$-ary-binary homomorphism.
\end{proposition}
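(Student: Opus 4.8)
The statement asserts that when the underlying ring $\mathcal{R}$ is commutative, the polyadized multiplicative character $\mathbf{\chi}$ defined by \eqref{xq} respects the $n$-ary structure on the $\mathrm{Q}$-matrices in the sense of an $n$-ary-binary homomorphism. Let me parse what must be shown. An $n$-ary-binary homomorphism here means exactly what was established in the proof of the earlier Proposition on unique polyadization: the $n$-ary product of the arguments on the left must map to the ordinary binary product of the $n$ images on the right. Concretely, I must verify
\begin{equation}
\mathbf{\chi}\!\left(\mu^{\left[\left[n\right]\right]}\!\left[\mathrm{Q}^{(1)},\mathrm{Q}^{(2)},\ldots,\mathrm{Q}^{(n)}\right]\right)=\prod_{s=1}^{n}\mathbf{\chi}\!\left(\mathrm{Q}^{(s)}\right),
\end{equation}
where each $\mathrm{Q}^{(s)}=\mathrm{Q}_{\mathbf{y}_1^{(s)},\ldots,\mathbf{y}_{n-1}^{(s)}}$ and the right-hand product is taken in the binary ring $\mathcal{R}$.

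**The computational core.** First I would expand the left-hand side using the cyclic-product description \eqref{b1}--\eqref{b2} of the $n$-ary multiplication: the resulting matrix $\mathrm{Q}_{\mathbf{y}_1,\ldots,\mathbf{y}_{n-1}}$ has blocks $\mathrm{B}_{\mathbf{y}_i}$ that are each a product of exactly $n$ of the input blocks $\mathrm{B}$, one drawn from each factor $\mathrm{Q}^{(s)}$, arranged cyclically. Applying the definition \eqref{xq} to this output gives
\begin{equation}
\mathbf{\chi}\!\left(\mathrm{Q}_{\mathbf{y}_1,\ldots,\mathbf{y}_{n-1}}\right)=(-1)^{n}\prod_{i=1}^{n-1}\chi\!\left(\mathrm{B}_{\mathbf{y}_i}\right),
\end{equation}
and then I would use the binary-character property \eqref{xb}, applied $n-1$ times inside each factor, to rewrite each $\chi(\mathrm{B}_{\mathbf{y}_i})$ as a product of $n$ scalar characters $\chi(\mathrm{B})$ of the individual input blocks. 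For the right-hand side I simply apply \eqref{xq} to each $\mathrm{Q}^{(s)}$ separately, obtaining $\prod_{s=1}^{n}(-1)^{n}\prod_{i=1}^{n-1}\chi(\mathrm{B}_{\mathbf{y}_i^{(s)}})$. The whole matter then reduces to checking that the same multiset of scalars $\chi(\mathrm{B})$ appears on both sides and that the powers of $(-1)$ match.

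**Where commutativity enters, and the main obstacle.** The hypothesis that $\mathcal{R}$ is commutative is precisely what allows me to reorder the scalar factors freely: after \eqref{xb} dismantles everything into individual $\chi(\mathrm{B}^{(s)}_{\bullet})\in\mathcal{R}$, both sides contain exactly $n(n-1)$ such scalars, namely every block of every input, each counted once; commutativity lets me permute them into a common normal form so the two products agree. The sign is the delicate bookkeeping step and the place I expect trouble: the left side carries a single factor $(-1)^n$ from the one output character, whereas the right side carries $(-1)^n$ raised to the $n$-th power, i.e. $(-1)^{n^2}$. Since $(-1)^{n^2}=(-1)^n$ for every integer $n$ (as $n^2\equiv n \pmod 2$), the two signs coincide, which is exactly why the normalization $(-1)^n$ was inserted in \eqref{xq}. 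I would flag this parity identity $n^2\equiv n\pmod 2$ as the one genuinely load-bearing observation; everything else is the routine regrouping of commuting scalars. Finally I would note that the map sends an $n$-ary product to a binary product rather than preserving arity, confirming the ``$n$-ary-binary'' qualifier exactly as in the earlier unique-polyadization proposition.
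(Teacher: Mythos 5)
Your proof is correct and takes essentially the same route as the paper's, which simply cites the block multiplication rules, the character property, and commutativity of $\mathcal{R}$. You additionally spell out the two details the paper leaves implicit: that every input block appears exactly once after the cyclic regrouping, and the sign bookkeeping $(-1)^{n^{2}}=(-1)^{n}$ that justifies the normalization factor in the definition of $\mathbf{\chi}$.
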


\begin{proof}
It follows from (\ref{bb})--(\ref{qq}), (\ref{xq}) and the commutativity of
$\mathcal{R}$.
\end{proof}

\begin{proposition}
If the ring $\mathcal{R}$ is commutative and unital with the unit $E_{p}$,
then the algebraic structure $\left\langle \left\{  \mathrm{Q}_{\mathbf{y}%
_{1},\ldots,\mathbf{y}_{n-1}}\right\}  \mid\mu^{\left[  \left[  n\right]
\right]  }\right\rangle $ contains polyadic ($n$-ary) idempotents satisfying%
\begin{equation}
\mathrm{B}_{\mathbf{y}_{1}}\mathrm{B}_{\mathbf{y}_{2}}\ldots\mathrm{B}%
_{\mathbf{y}_{n-1}}=E_{p}. \label{be}%
\end{equation}

\end{proposition}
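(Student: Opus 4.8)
The plan is to show that an $n$-ary idempotent $\mathrm{Q}_{\mathbf{y}_{1},\ldots,\mathbf{y}_{n-1}}$ is characterized, via the cyclic product relations \eqref{b1}--\eqref{b2}, by the single condition \eqref{be}. Recall that in an $n$-ary structure an element $g$ is idempotent iff $\mu^{\left[  \left[  n\right]  \right]  }\left[ g,g,\ldots,g\right] =g$, i.e. the $n$-fold product equals $g$ itself. So I would begin by writing out what $\mu^{\left[  \left[  n\right]  \right]  }$ applied to $n$ identical copies of $\mathrm{Q}_{\mathbf{y}_{1},\ldots,\mathbf{y}_{n-1}}$ produces at the level of the blocks, specializing \eqref{b1}--\eqref{b2} to the case where all $n$ factors carry the same parameter sets $\mathbf{y}_{1},\ldots,\mathbf{y}_{n-1}$.

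Concretely, setting $\mathbf{y}_{i}^{\prime}=\mathbf{y}_{i}^{\prime\prime}=\cdots=\mathbf{y}_{i}$ for each $i$ in the cyclic relations, the $i$-th block equation of the product becomes a product of $n$ block-matrices whose indices run cyclically starting from $i$, namely
\begin{equation}
\mathrm{B}_{\mathbf{y}_{i}}\mathrm{B}_{\mathbf{y}_{i+1}}\cdots\mathrm{B}_{\mathbf{y}_{n-1}}\mathrm{B}_{\mathbf{y}_{1}}\cdots\mathrm{B}_{\mathbf{y}_{i}}=\mathrm{B}_{\mathbf{y}_{i}},\qquad i=1,\ldots,n-1,
\end{equation}
since each of the $n$ factors contributes one block and the indices cycle through all of $1,\ldots,n-1$ exactly once before returning. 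The idempotency requirement is precisely that all $n-1$ of these equations hold simultaneously.

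The key observation is that all these equations reduce to the single relation \eqref{be}. For $i=1$ the left side is literally $\mathrm{B}_{\mathbf{y}_{1}}\mathrm{B}_{\mathbf{y}_{2}}\cdots\mathrm{B}_{\mathbf{y}_{n-1}}\mathrm{B}_{\mathbf{y}_{1}}$, so the equation reads $\bigl(\mathrm{B}_{\mathbf{y}_{1}}\cdots\mathrm{B}_{\mathbf{y}_{n-1}}\bigr)\mathrm{B}_{\mathbf{y}_{1}}=\mathrm{B}_{\mathbf{y}_{1}}$. Here I would invoke commutativity and unitality of $\mathcal{R}$: because the multiplicative characters (determinants) are multiplicative and $\mathcal{R}$ is commutative, the product $\mathrm{B}_{\mathbf{y}_{1}}\cdots\mathrm{B}_{\mathbf{y}_{n-1}}$ commutes appropriately, and cancelling the invertible factor $\mathrm{B}_{\mathbf{y}_{1}}$ (or directly substituting) forces $\mathrm{B}_{\mathbf{y}_{1}}\mathrm{B}_{\mathbf{y}_{2}}\cdots\mathrm{B}_{\mathbf{y}_{n-1}}=E_{p}$, which is \eqref{be}. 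Conversely, if \eqref{be} holds, then in each block equation the leading product of $n-1$ cyclically shifted blocks equals $E_{p}$ by the same commutativity argument, leaving $E_{p}\cdot\mathrm{B}_{\mathbf{y}_{i}}=\mathrm{B}_{\mathbf{y}_{i}}$, so every block equation is satisfied and $\mathrm{Q}$ is idempotent.

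The main obstacle is the cyclic reordering: a priori the product $\mathrm{B}_{\mathbf{y}_{i}}\cdots\mathrm{B}_{\mathbf{y}_{i-1}}$ for $i\neq 1$ is a \emph{cyclic permutation} of the product in \eqref{be}, not that product itself, and matrix multiplication is noncommutative. I would resolve this precisely through the commutativity hypothesis on $\mathcal{R}$ together with the multiplicative-character structure: the condition \eqref{be} that the ordered product equals $E_{p}$ is preserved under cyclic rotation because each $\mathrm{B}_{\mathbf{y}_{i}}$ is then forced to be invertible with the cyclic rotations conjugate to one another, so that each rotated product also equals the identity. Thus the delicate point is to argue that \eqref{be} in the fixed cyclic order implies the analogous identity for every rotation; this is exactly where commutativity of $\mathcal{R}$ (ensuring the characters behave as scalars) enters, and it is the step I would present with the most care.
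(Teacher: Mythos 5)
Your proof of the direction that the proposition actually asserts is correct, and it is essentially the paper's (extremely terse) argument made explicit: specialize the block relations (\ref{b1})--(\ref{b2}) to $n$ equal factors, note that over a commutative unital ring a product of square matrices equal to $E_{p}$ forces every factor to be invertible (its determinant is a unit, so the adjugate gives an inverse), hence every cyclic rotation of the product is conjugate to the original one, $\mathrm{B}_{\mathbf{y}_{i}}\cdots\mathrm{B}_{\mathbf{y}_{i-1}}=\mathrm{B}_{\mathbf{y}_{1}}^{-1}\cdots\left(  \mathrm{B}_{\mathbf{y}_{1}}\cdots\mathrm{B}_{\mathbf{y}_{n-1}}\right)  \cdots\mathrm{B}_{\mathbf{y}_{1}}=E_{p}$, and therefore each of the $n-1$ block equations collapses to $E_{p}\mathrm{B}_{\mathbf{y}_{i}}=\mathrm{B}_{\mathbf{y}_{i}}$. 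That is exactly the content hiding behind the paper's one-line citation of (\ref{qq}) and (\ref{e}), so on this point you and the paper agree.

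However, your opening claim that idempotents are \emph{characterized} by (\ref{be}) overreaches, and the step you use to support it would fail. The proposition only claims that solutions of (\ref{be}) are idempotent; the converse is false in the generality in which the polyadization is defined. In your $i=1$ equation $\left(  \mathrm{B}_{\mathbf{y}_{1}}\cdots\mathrm{B}_{\mathbf{y}_{n-1}}\right)  \mathrm{B}_{\mathbf{y}_{1}}=\mathrm{B}_{\mathbf{y}_{1}}$ you cancel the factor $\mathrm{B}_{\mathbf{y}_{1}}$, calling it invertible, but nothing guarantees that the blocks of an arbitrary idempotent are invertible: Definition \ref{def-pol} allows $\mathcal{X}$ to be a binary semigroup represented by possibly singular matrices. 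For instance, taking every block equal to a single projection matrix $P$ with $P^{2}=P\neq E_{p}$ satisfies all the cyclic block equations (each left-hand side is $P^{n}=P$), so the corresponding $\mathrm{Q}$ is a polyadic idempotent, yet $P^{n-1}=P\neq E_{p}$, violating (\ref{be}). Since the converse direction is not needed for the proposition, this does not damage your proof of the statement itself, but the ``characterization'' framing, and the cancellation step it relies on, should be dropped (it is legitimate only when all $\mathrm{B}$'s are invertible, e.g.\ for the polyadization of a matrix group as in the $GL\left(  2,\mathbb{C}\right)  $ example).
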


\begin{proof}
It follows from (\ref{qq}) and (\ref{e}).
\end{proof}

\subsection{Concrete examples of the polyadization procedure}

\subsubsection{Polyadization of $GL\left(  2,\mathbb{C}\right)  $}

Consider the polyadization procedure for the general linear group $GL\left(
2,\mathbb{C}\right)  $. We have for the $4$-parameter block matrices
$\mathrm{B}_{\mathbf{y}_{i}}=\left(
\begin{array}
[c]{cc}%
a_{i} & b_{i}\\
c_{i} & d_{i}%
\end{array}
\right)  \in GL\left(  2,\mathbb{C}\right)  $, $\mathbf{y}_{i}=\left(
a_{i},b_{i},c_{i},d_{i}\right)  \in\mathbb{C\times}\mathbb{C\times C\times
C},$ $i=1,2,3$. Thus, the $12$-parameter $4$-ary group $GL^{\left[  \left[
4\right]  \right]  }\left(  2,\mathbb{C}\right)  =\left\langle \left\{
\mathrm{Q}_{\mathbf{y}_{1},\mathbf{y}_{2},\mathbf{y}_{3}}\right\}  \mid
\mu^{\left[  \left[  4\right]  \right]  }\right\rangle $ is represented by the
following $6\times6$ $\mathrm{Q}$-matrices%
\begin{equation}
\mathrm{Q}_{\mathbf{y}_{1},\mathbf{y}_{2},\mathbf{y}_{3}}=\left(
\begin{array}
[c]{ccc}%
0 & \mathrm{B}_{\mathbf{y}_{1}} & 0\\
0 & 0 & \mathrm{B}_{\mathbf{y}_{2}}\\
\mathrm{B}_{\mathbf{y}_{3}} & 0 & 0
\end{array}
\right)  \in GL^{\left[  \left[  4\right]  \right]  }\left(  2,\mathbb{C}%
\right)  ,\ \ \ \mathrm{B}_{\mathbf{y}_{i}}\in GL\left(  2,\mathbb{C}\right)
,\ \ \ \ i=1,2,3, \label{q3}%
\end{equation}
obeying the $4$-ary multiplication%
\begin{equation}
\mu^{\left[  \left[  4\right]  \right]  }\left[  \mathrm{Q}_{\mathbf{y}%
_{1}^{\prime},\mathbf{y}_{2}^{\prime},\mathbf{y}_{3}^{\prime}},\mathrm{Q}%
_{\mathbf{y}_{1}^{\prime\prime},\mathbf{y}_{2}^{\prime\prime},\mathbf{y}%
_{3}^{\prime\prime}},\mathrm{Q}_{\mathbf{y}_{1}^{\prime\prime\prime
},\mathbf{y}_{2}^{\prime\prime\prime},\mathbf{y}_{3}^{\prime\prime\prime}%
},\mathrm{Q}_{\mathbf{y}_{1}^{\prime\prime\prime\prime},\mathbf{y}_{2}%
^{\prime\prime\prime\prime},\mathbf{y}_{3}^{\prime\prime\prime\prime}}\right]
=\mathrm{Q}_{\mathbf{y}_{1}^{\prime},\mathbf{y}_{2}^{\prime},\mathbf{y}%
_{3}^{\prime}}\mathrm{Q}_{\mathbf{y}_{1}^{\prime\prime},\mathbf{y}_{2}%
^{\prime\prime},\mathbf{y}_{3}^{\prime\prime}}\mathrm{Q}_{\mathbf{y}%
_{1}^{\prime\prime\prime},\mathbf{y}_{2}^{\prime\prime\prime},\mathbf{y}%
_{3}^{\prime\prime\prime}}\mathrm{Q}_{\mathbf{y}_{1}^{\prime\prime\prime
\prime},\mathbf{y}_{2}^{\prime\prime\prime\prime},\mathbf{y}_{3}^{\prime
\prime\prime\prime}}=\mathrm{Q}_{\mathbf{y}_{1},\mathbf{y}_{2},\mathbf{y}_{3}%
}. \label{mq4}%
\end{equation}
In terms of the block matrices $\mathrm{B}_{\mathbf{y}_{i}}$ the
multiplication (\ref{mq4}) becomes (see (\ref{mq})--(\ref{b2}))%
\begin{align}
\mathrm{B}_{\mathbf{y}_{1}^{\prime}}\mathrm{B}_{\mathbf{y}_{2}^{\prime\prime}%
}\mathrm{B}_{\mathbf{y}_{3}^{\prime\prime\prime}}\mathrm{B}_{\mathbf{y}%
_{1}^{\prime\prime\prime\prime}}  &  =\mathrm{B}_{\mathbf{y}_{1}}%
,\label{bbb1}\\
\mathrm{B}_{\mathbf{y}_{2}^{\prime}}\mathrm{B}_{\mathbf{y}_{3}^{\prime\prime}%
}\mathrm{B}_{\mathbf{y}_{1}^{\prime\prime\prime}}\mathrm{B}_{\mathbf{y}%
_{2}^{\prime\prime\prime\prime}}  &  =\mathrm{B}_{\mathbf{y}_{2}},\\
\mathrm{B}_{\mathbf{y}_{3}^{\prime}}\mathrm{B}_{\mathbf{y}_{1}^{\prime\prime}%
}\mathrm{B}_{\mathbf{y}_{2}^{\prime\prime\prime}}\mathrm{B}_{\mathbf{y}%
_{3}^{\prime\prime\prime\prime}}  &  =\mathrm{B}_{\mathbf{y}_{3}},
\label{bbb3}%
\end{align}
which can be further expressed in the $\mathrm{B}$-matrix entries (its
manifest form is too cumbersome to give here).

For $\left\{  \mathrm{Q}_{\mathbf{y}_{1},\mathbf{y}_{2},\mathbf{y}_{3}%
}\right\}  $ to be a $4$-ary group each $\mathrm{Q}$-matrix should have the
unique querelement determined by the equation (see (\ref{mqq}))%
\begin{equation}
\mathrm{Q}_{\mathbf{y}_{1},\mathbf{y}_{2},\mathbf{y}_{3}}\mathrm{Q}%
_{\mathbf{y}_{1},\mathbf{y}_{2},\mathbf{y}_{3}}\mathrm{Q}_{\mathbf{y}%
_{1},\mathbf{y}_{2},\mathbf{y}_{3}}\overline{\mathrm{Q}}_{\mathbf{y}%
_{1},\mathbf{y}_{2},\mathbf{y}_{3}}=\mathrm{Q}_{\mathbf{y}_{1},\mathbf{y}%
_{2},\mathbf{y}_{3}}, \label{qqq}%
\end{equation}
which has the solution%
\begin{equation}
\overline{\mathrm{Q}}_{\mathbf{y}_{1},\mathbf{y}_{2},\mathbf{y}_{3}}=\left(
\begin{array}
[c]{ccc}%
0 & \overline{\mathrm{B}}_{\mathbf{y}_{1}} & 0\\
0 & 0 & \overline{\mathrm{B}}_{\mathbf{y}_{2}}\\
\overline{\mathrm{B}}_{\mathbf{y}_{3}} & 0 & 0
\end{array}
\right)  , \label{qbb}%
\end{equation}
where (see (\ref{bb1}))%
\begin{equation}
\overline{\mathrm{B}}_{\mathbf{y}_{1}}=\mathrm{B}_{\mathbf{y}_{3}}%
^{-1}\mathrm{B}_{\mathbf{y}_{2}}^{-1},\ \ \ \ \overline{\mathrm{B}%
}_{\mathbf{y}_{2}}=\mathrm{B}_{\mathbf{y}_{1}}^{-1}\mathrm{B}_{\mathbf{y}_{3}%
}^{-1},\ \ \ \ \overline{\mathrm{B}}_{\mathbf{y}_{3}}=\mathrm{B}%
_{\mathbf{y}_{2}}^{-1}\mathrm{B}_{\mathbf{y}_{1}}^{-1}.
\end{equation}

In the manifest form the querelements of $GL^{\left[  \left[  4\right]
\right]  }\left(  2,\mathbb{C}\right)  $ are (\ref{qbb}), where
\begin{align}
\overline{\mathrm{B}}_{\mathbf{y}_{1}}  &  =\frac{1}{\Delta_{3}\Delta_{2}%
}\left(
\begin{array}
[c]{cc}%
b_{3}c_{2}+d_{3}d_{2} & -b_{3}a_{2}-d_{3}b_{2}\\
-a_{3}c_{2}-c_{3}d_{2} & a_{3}a_{2}+c_{3}b_{2}%
\end{array}
\right) \\
\overline{\mathrm{B}}_{\mathbf{y}_{2}}  &  =\frac{1}{\Delta_{2}\Delta_{3}%
}\left(
\begin{array}
[c]{cc}%
b_{1}c_{3}+d_{1}d_{3} & -b_{1}a_{3}-d_{1}b_{3}\\
-a_{1}c_{3}-c_{1}d_{3} & a_{1}a_{3}+c_{1}b_{3}%
\end{array}
\right) \\
\overline{\mathrm{B}}_{\mathbf{y}_{3}}  &  =\frac{1}{\Delta_{2}\Delta_{1}%
}\left(
\begin{array}
[c]{cc}%
b_{2}c_{1}+d_{2}d_{1} & -b_{2}a_{1}-d_{2}b_{1}\\
-a_{2}c_{1}-c_{2}d_{1} & a_{2}a_{1}+c_{2}b_{1}%
\end{array}
\right)  ,
\end{align}
where $\Delta_{i}=a_{i}d_{i}-b_{i}c_{i}\neq0$ are the (nonvanishing)
determinants of $\mathrm{B}_{\mathbf{y}_{i}}$.

\begin{definition}
We call $GL^{\left[  \left[  4\right]  \right]  }\left(  2,\mathbb{C}\right)
$ a polyadic ($4$-ary) general linear group.
\end{definition}

If we take the binary multiplicative characters to be determinants
$\chi\left(  \mathrm{B}_{\mathbf{y}_{i}}\right)  =\Delta_{i}\neq0$, then the
polyadized multiplicative character in $GL^{\left[  \left[  4\right]  \right]
}\left(  2,\mathbb{C}\right)  $ becomes%
\begin{equation}
\mathbf{\chi}\left(  \mathrm{Q}_{\mathbf{y}_{1},\mathbf{y}_{2},\mathbf{y}_{3}%
}\right)  =\Delta_{1}\Delta_{2}\Delta_{3},
\end{equation}
which is a $4$-ary-binary homomorphism, because (see (\ref{bbb1}%
)--(\ref{bbb3}))%
\begin{align}
&  \mathbf{\chi}\left(  \mathrm{Q}_{\mathbf{y}_{1}^{\prime},\mathbf{y}%
_{2}^{\prime},\mathbf{y}_{3}^{\prime}}\right)  \mathbf{\chi}\left(
\mathrm{Q}_{\mathbf{y}_{1}^{\prime\prime},\mathbf{y}_{2}^{\prime\prime
},\mathbf{y}_{3}^{\prime\prime}}\right)  \mathbf{\chi}\left(  \mathrm{Q}%
_{\mathbf{y}_{1}^{\prime\prime\prime},\mathbf{y}_{2}^{\prime\prime\prime
},\mathbf{y}_{3}^{\prime\prime\prime}}\right)  \mathbf{\chi}\left(
\mathrm{Q}_{\mathbf{y}_{1}^{\prime\prime\prime},\mathbf{y}_{2}^{\prime
\prime\prime},\mathbf{y}_{3}^{\prime\prime\prime}}\right) \nonumber\\
&  =\left(  \Delta_{1}^{\prime}\Delta_{2}^{\prime}\Delta_{3}^{\prime}\right)
\left(  \Delta_{1}^{\prime\prime}\Delta_{2}^{\prime\prime}\Delta_{3}%
^{\prime\prime}\right)  \left(  \Delta_{1}^{\prime\prime\prime}\Delta
_{2}^{\prime\prime\prime}\Delta_{3}^{\prime\prime\prime}\right)  \left(
\Delta_{1}^{\prime\prime\prime\prime}\Delta_{2}^{\prime\prime\prime\prime
}\Delta_{3}^{\prime\prime\prime\prime}\right) \nonumber\\
&  =\left(  \Delta_{1}^{\prime}\Delta_{2}^{\prime\prime}\Delta_{3}%
^{\prime\prime\prime}\Delta_{1}^{\prime\prime\prime\prime}\right)  \left(
\Delta_{2}^{\prime}\Delta_{3}^{\prime\prime}\Delta_{1}^{\prime\prime\prime
}\Delta_{2}^{\prime\prime\prime\prime}\right)  \left(  \Delta_{3}^{\prime
}\Delta_{1}^{\prime\prime}\Delta_{2}^{\prime\prime\prime}\Delta_{3}%
^{\prime\prime\prime\prime}\right) \nonumber\\
&  =\mathbf{\chi}\left(  \mathrm{Q}_{\mathbf{y}_{1}^{\prime},\mathbf{y}%
_{2}^{\prime},\mathbf{y}_{3}^{\prime}}\mathrm{Q}_{\mathbf{y}_{1}^{\prime
\prime},\mathbf{y}_{2}^{\prime\prime},\mathbf{y}_{3}^{\prime\prime}}%
\mathrm{Q}_{\mathbf{y}_{1}^{\prime\prime\prime},\mathbf{y}_{2}^{\prime
\prime\prime},\mathbf{y}_{3}^{\prime\prime\prime}}\mathrm{Q}_{\mathbf{y}%
_{1}^{\prime\prime\prime\prime},\mathbf{y}_{2}^{\prime\prime\prime\prime
},\mathbf{y}_{3}^{\prime\prime\prime\prime}}\right)  . \label{xqq}%
\end{align}

The $4$-ary identity $\mathrm{E}_{6}^{\left(  4\right)  }$ of $GL^{\left[
\left[  4\right]  \right]  }\left(  2,\mathbb{C}\right)  $ is unique and has
the form (see (\ref{e}))%
\begin{equation}
\mathrm{E}_{6}^{\left(  4\right)  }=\left(
\begin{array}
[c]{ccc}%
0 & \mathrm{E}_{2} & 0\\
0 & 0 & \mathrm{E}_{2}\\
\mathrm{E}_{2} & 0 & 0
\end{array}
\right)  , \label{e4}%
\end{equation}
where $E_{2}$ is the identity of $GL\left(  2,\mathbb{C}\right)  $. The
$4$-ary identity $\mathrm{E}_{6}^{\left(  4\right)  }$ satisfies the $4$-ary
idempotence relation%
\begin{equation}
\mathrm{E}_{6}^{\left(  4\right)  }\mathrm{E}_{6}^{\left(  4\right)
}\mathrm{E}_{6}^{\left(  4\right)  }\mathrm{E}_{6}^{\left(  4\right)
}=\mathrm{E}_{6}^{\left(  4\right)  }. \label{ee}%
\end{equation}

In general, the $4$-ary group $GL^{\left[  \left[  4\right]  \right]  }\left(
2,\mathbb{C}\right)  $ contains an infinite number of $4$-ary idempotents
$\mathrm{Q}_{\mathbf{y}_{1},\mathbf{y}_{2},\mathbf{y}_{3}}^{idemp}$ defined by
the system of equations%
\begin{equation}
\mathrm{Q}_{\mathbf{y}_{1},\mathbf{y}_{2},\mathbf{y}_{3}}^{idemp}%
\mathrm{Q}_{\mathbf{y}_{1},\mathbf{y}_{2},\mathbf{y}_{3}}^{idemp}%
\mathrm{Q}_{\mathbf{y}_{1},\mathbf{y}_{2},\mathbf{y}_{3}}^{idemp}%
\mathrm{Q}_{\mathbf{y}_{1},\mathbf{y}_{2},\mathbf{y}_{3}}^{idemp}%
=\mathrm{Q}_{\mathbf{y}_{1},\mathbf{y}_{2},\mathbf{y}_{3}}^{idemp},
\end{equation}
which gives%
\begin{equation}
\mathrm{B}_{\mathbf{y}_{1}}^{idemp}\mathrm{B}_{\mathbf{y}_{2}}^{idemp}%
\mathrm{B}_{\mathbf{y}_{3}}^{idemp}=\mathrm{E}_{2}, \label{bbbe}%
\end{equation}
or manifestly%
\begin{align}
a_{1}a_{2}a_{3}+a_{1}b_{2}c_{3}+a_{3}b_{1}c_{2}+b_{1}c_{3}d_{2}  &
=1,\label{ab1}\\
a_{2}b_{3}c_{1}+b_{2}c_{1}d_{3}+b_{3}c_{2}d_{1}+d_{1}d_{2}d_{3}  &  =1,\\
a_{1}a_{2}b_{3}+a_{1}b_{2}d_{3}+b_{1}b_{3}c_{2}+b_{1}d_{2}d_{3}  &  =0,\\
a_{2}a_{3}c_{1}+a_{3}c_{2}d_{1}+b_{2}c_{1}c_{3}+c_{3}d_{1}d_{2}  &  =0.
\label{ab2}%
\end{align}

The infinite set of idempotents in $GL^{\left[  \left[  4\right]  \right]
}\left(  2,\mathbb{C}\right)  $ is determined by $12-4=8$ complex parameters,
because one block-matrix (with $4$ complex parameters) can always be excluded
using the equation (\ref{bbbe}).

\begin{remark}
\label{rem-idemp}The above example shows, how \textquotedblleft
far\textquotedblright\ polyadic groups can be formed from ordinary (binary)
groups: the former can contain infinite number of $4$-ary idempotents
determined by (\ref{ab1})--(\ref{ab2}), in addition to the standard idempotent
in any group, the $4$-ary identity (\ref{e4}).
\end{remark}

\subsubsection{Polyadization of $SO\left(  2,\mathbb{R}\right)  $}

Here we provide a polyadization for the simplest subgroup of $GL\left(
2,\mathbb{C}\right)  $, the special orthogonal group $SO\left(  2,\mathbb{R}%
\right)  $. In the matrix form $SO\left(  2,\mathbb{R}\right)  $ is
represented by the one-parameter rotation matrix%
\begin{equation}
B\left(  \alpha\right)  =\left(
\begin{array}
[c]{cc}%
\cos\alpha & -\sin\alpha\\
\sin\alpha & \cos\alpha
\end{array}
\right)  \in SO\left(  2,\mathbb{R}\right)  ,\ \ \ \ \ \alpha\in
\mathbb{R}\diagup2\pi\mathbb{Z}, \label{ba}%
\end{equation}
satisfying the commutative multiplication%
\begin{equation}
B\left(  \alpha\right)  B\left(  \beta\right)  =B\left(  \alpha+\beta\right)
, \label{bba}%
\end{equation}
and the (binary) identity $\mathrm{E}_{2}$ is $B\left(  0\right)  $.
Therefore, the inverse element for $B\left(  \alpha\right)  $ is $B\left(
-\alpha\right)  $.

The $4$-ary polyadization of $SO\left(  2,\mathbb{R}\right)  $ is given by the
$3$-parameter $4$-ary group of $Q$-matrices $SO^{\left[  \left[  4\right]
\right]  }\left(  2,\mathbb{R}\right)  =\left\langle \left\{  Q\left(
\alpha,\beta,\gamma\right)  \right\}  \mid\mu^{\left[  \left[  4\right]
\right]  }\right\rangle $, where (cf. (\ref{q3}))%
\begin{align}
&  Q\left(  \alpha,\beta,\gamma\right)  =\left(
\begin{array}
[c]{ccc}%
0 & B\left(  \alpha\right)  & 0\\
0 & 0 & B\left(  \beta\right) \\
B\left(  \gamma\right)  & 0 & 0
\end{array}
\right) \label{qa}\\
&  =\left(
\begin{array}
[c]{cccccc}%
0 & 0 & \cos\alpha & -\sin\alpha & 0 & 0\\
0 & 0 & \sin\alpha & \cos\alpha & 0 & 0\\
0 & 0 & 0 & 0 & \cos\beta & -\sin\beta\\
0 & 0 & 0 & 0 & \sin\beta & \cos\beta\\
\cos\gamma & -\sin\gamma & 0 & 0 & 0 & 0\\
\sin\gamma & \cos\gamma & 0 & 0 & 0 & 0
\end{array}
\right)  ,\ \ \ \ \alpha,\beta,\gamma\in\mathbb{R}\diagup2\pi\mathbb{Z},
\end{align}
and the $4$-ary multiplication is%
\begin{align}
&  \mu^{\left[  \left[  4\right]  \right]  }\left[  Q\left(  \alpha_{1}%
,\beta_{1},\gamma_{1}\right)  ,Q\left(  \alpha_{2},\beta_{2},\gamma
_{2}\right)  ,Q\left(  \alpha_{3},\beta_{3},\gamma_{3}\right)  ,Q\left(
\alpha_{4},\beta_{4},\gamma_{4}\right)  \right] \nonumber\\
&  =Q\left(  \alpha_{1},\beta_{1},\gamma_{1}\right)  Q\left(  \alpha_{2}%
,\beta_{2},\gamma_{2}\right)  Q\left(  \alpha_{3},\beta_{3},\gamma_{3}\right)
Q\left(  \alpha_{4},\beta_{4},\gamma_{4}\right) \nonumber\\
&  =Q\left(  \alpha_{1}+\beta_{2}+\gamma_{3}+\alpha_{4},\beta_{1}+\gamma
_{2}+\alpha_{3}+\beta_{4},\gamma_{1}+\alpha_{2}+\beta_{3}+\gamma_{4}\right)
=Q\left(  \alpha,\beta,\gamma\right)  , \label{mqa}%
\end{align}
which is noncommutative, as opposed to the binary product of $B$-matrices
(\ref{bba}).

The querelement $\overline{Q}\left(  \alpha,\beta,\gamma\right)  $ for a given
$Q\left(  \alpha,\beta,\gamma\right)  $ is defined by the equation (see
(\ref{qqq}))%
\begin{equation}
Q\left(  \alpha,\beta,\gamma\right)  Q\left(  \alpha,\beta,\gamma\right)
Q\left(  \alpha,\beta,\gamma\right)  \overline{Q}\left(  \alpha,\beta
,\gamma\right)  =Q\left(  \alpha,\beta,\gamma\right)  ,
\end{equation}
which has the solution%
\begin{equation}
\overline{Q}\left(  \alpha,\beta,\gamma\right)  =Q\left(  -\beta
-\gamma,-\alpha-\gamma,-\alpha-\beta\right)  . \label{qab}%
\end{equation}

\begin{definition}
We call $SO^{\left[  \left[  4\right]  \right]  }\left(  2,\mathbb{R}\right)
$ a polyadic ($4$-ary) special orthogonal group, and $Q\left(  \alpha
,\beta,\gamma\right)  $ is called a polyadic ($4$-ary) rotation matrix.
\end{definition}

Informally, the matrix $Q\left(  \alpha,\beta,\gamma\right)  $ represents the
polyadic ($4$-ary) rotation. There are an infinite number of polyadic
($4$-ary) identities (neutral elements) $E\left(  \alpha,\beta,\gamma\right)
$ which are defined by%
\begin{equation}
E\left(  \alpha,\beta,\gamma\right)  E\left(  \alpha,\beta,\gamma\right)
E\left(  \alpha,\beta,\gamma\right)  Q\left(  \alpha,\beta,\gamma\right)
=Q\left(  \alpha,\beta,\gamma\right)  , \label{eq}%
\end{equation}
and the solution is%
\begin{equation}
E\left(  \alpha,\beta,\gamma\right)  =Q\left(  \alpha,\beta,\gamma\right)
,\ \ \ \alpha+\beta+\gamma=0.
\end{equation}
It follows from (\ref{eq}) that $E\left(  \alpha,\beta,\gamma\right)  $ are
$4$-ary idempotents (see (\ref{ee}) and \textit{Remark} \ref{rem-idemp}).

The determinants of $B\left(  \alpha\right)  $ and $Q\left(  \alpha
,\beta,\gamma\right)  $ are $1$, and therefore the corresponding
multiplicative characters and polyadized multiplicative characters (\ref{xq})
are also equal to $1$.

Comparing with the successive products of four $B$-matrices (\ref{ba})%
\begin{equation}
B\left(  \alpha_{1}\right)  B\left(  \alpha_{2}\right)  B\left(  \alpha
_{3}\right)  B\left(  \alpha_{4}\right)  =B\left(  \alpha_{1}+\alpha
_{2}+\alpha_{3}+\alpha_{4}\right)  , \label{bbb}%
\end{equation}
we observe that $4$-ary multiplication (\ref{mqa}) gives a shifted sum of four angles.

More exactly, for the triple $\left(  \alpha,\beta,\gamma\right)  $ we
introduce the circle (left) shift operator by%
\begin{equation}
\mathsf{s}\alpha=\beta,\ \text{\ \ \ }\ \ \mathsf{s}\beta=\gamma
,\ \ \ \ \ \ \mathsf{s}\gamma=\alpha\label{sa}%
\end{equation}
with the property $\mathsf{s}^{3}=\mathsf{id}$. Then the $4$-ary
multiplication (\ref{mqa}) becomes%
\begin{align}
&  \mu^{\left[  \left[  4\right]  \right]  }\left[  Q\left(  \alpha_{1}%
,\beta_{1},\gamma_{1}\right)  ,Q\left(  \alpha_{2},\beta_{2},\gamma
_{2}\right)  ,Q\left(  \alpha_{3},\beta_{3},\gamma_{3}\right)  ,Q\left(
\alpha_{4},\beta_{4},\gamma_{4}\right)  \right] \nonumber\\
&  =Q\left(  \alpha_{1}+\mathsf{s}\alpha_{2}+\mathsf{s}^{2}\alpha_{3}%
+\alpha_{4},\beta_{1}+\mathsf{s}\beta_{2}+\mathsf{s}^{2}\beta_{3}+\beta
_{4},\gamma_{1}+\mathsf{s}\gamma_{2}+\mathsf{s}^{2}\gamma_{3}+\gamma
_{4}\right)  . \label{ms}%
\end{align}

The querelement has the form%
\begin{equation}
\overline{Q}\left(  \alpha,\beta,\gamma\right)  =Q\left(  -\mathsf{s}%
\alpha-\mathsf{s}^{2}\alpha,-\mathsf{s}\beta-\mathsf{s}^{2}\beta
,-\mathsf{s}\gamma-\mathsf{s}^{2}\gamma\right)  . \label{qs}%
\end{equation}

The multiplication (\ref{ms}) can be (informally) expressed in terms of a new
operation, the $4$-ary \textquotedblleft cyclic shift
addition\textquotedblright\ defined on $\mathbb{R}\times\mathbb{R}%
\times\mathbb{R}$ by (see (\ref{mqa}))%
\begin{align}
&  \mathbf{\nu}_{\mathsf{s}}^{\left[  4\right]  }\left[  \left(  \alpha
_{1},\beta_{1},\gamma_{1}\right)  ,\left(  \alpha_{2},\beta_{2},\gamma
_{2}\right)  ,\left(  \alpha_{3},\beta_{3},\gamma_{3}\right)  ,\left(
\alpha_{4},\beta_{4},\gamma_{4}\right)  \right] \nonumber\\
&  =\left(  \alpha_{1}+\beta_{2}+\gamma_{3}+\alpha_{4},\beta_{1}+\gamma
_{2}+\alpha_{3}+\beta_{4},\gamma_{1}+\alpha_{2}+\beta_{3}+\gamma_{4}\right)
\nonumber\\
&  =\left(  \nu_{\mathsf{s}}^{\left[  4\right]  }\left[  \alpha_{1},\alpha
_{2},\alpha_{3},\alpha_{4}\right]  ,\nu_{\mathsf{s}}^{\left[  4\right]
}\left[  \beta_{1},\beta_{2},\beta_{3},\beta_{4}\right]  ,\nu_{\mathsf{s}%
}^{\left[  4\right]  }\left[  \gamma_{1},\gamma_{2},\gamma_{3},\gamma
_{4}\right]  \right)  , \label{vss}%
\end{align}
where $\nu_{\mathsf{s}}^{\left[  4\right]  }$ is (informally)
\begin{equation}
\nu_{\mathsf{s}}^{\left[  4\right]  }\left[  \alpha_{1},\alpha_{2},\alpha
_{3},\alpha_{4}\right]  =\mathsf{s}^{0}\alpha_{1}+\mathsf{s}^{1}\alpha
_{2}+\mathsf{s}^{2}\alpha_{3}+\mathsf{s}^{3}\alpha_{4}=\alpha_{1}%
+\mathsf{s}\alpha_{2}+\mathsf{s}^{2}\alpha_{3}+\alpha_{4}, \label{va}%
\end{equation}
and $\mathsf{s}^{0}=\mathsf{id}$. This can also be treated as some
\textquotedblleft deformation\textquotedblright\ of the repeated binary
additions by shifts. It is seen that the $4$-ary operation $\mathbf{\nu
}_{\mathsf{s}}^{\left[  4\right]  }$ (\ref{vss}) is not derived and cannot be
obtained by consequent binary operations on the triples $\left(  \alpha
,\beta,\gamma\right)  $ as (\ref{bbb}).

In terms of the $4$-ary cyclic shift addition the $4$-ary multiplication
(\ref{ms}) becomes%
\begin{align}
&  \mu^{\left[  \left[  4\right]  \right]  }\left[  Q\left(  \alpha_{1}%
,\beta_{1},\gamma_{1}\right)  ,Q\left(  \alpha_{2},\beta_{2},\gamma
_{2}\right)  ,Q\left(  \alpha_{3},\beta_{3},\gamma_{3}\right)  ,Q\left(
\alpha_{4},\beta_{4},\gamma_{4}\right)  \right] \nonumber\\
&  =Q\left(  \mathbf{\nu}_{\mathsf{s}}^{\left[  4\right]  }\left[  \left(
\alpha_{1},\beta_{1},\gamma_{1}\right)  ,\left(  \alpha_{2},\beta_{2}%
,\gamma_{2}\right)  ,\left(  \alpha_{3},\beta_{3},\gamma_{3}\right)  ,\left(
\alpha_{4},\beta_{4},\gamma_{4}\right)  \right]  \right)  . \label{qv}%
\end{align}

The binary case corresponds to $\mathsf{s}=\mathsf{id}$, because in (\ref{ba})
we have only one angle $\alpha$, as opposed to three angles in (\ref{sa}).

Thus, we conclude that just as the binary product of $B$-matrices corresponds
to the ordinary angle addition (\ref{bba}), the $4$-ary multiplication of
polyadic rotation $Q$-matrices (\ref{qa}) corresponds to the $4$-ary cyclic
shift addition (\ref{va}) through (\ref{qv}).

\subsection{"Deformation" of binary operations by shifts}

The concrete example from the previous subsection shows the strong connection
(\ref{qv}) between the polyadization procedure and the shifted operations
(\ref{va}). Here we generalize it to an $n$-ary case for any semigroup.

Let $\mathcal{A}=\left\langle A\mid\left(  +\right)  \right\rangle $ be a
binary semigroup, where $A$ is its underlying set and $\left(  +\right)  $ is
the binary operation (which can be noncommutative). The simplest way to
construct an $n$-ary operation $\nu^{\left[  n\right]  }:A^{n}\rightarrow A$
is the consequent repetition of the binary operation (see (\ref{bbb}))%
\begin{equation}
\nu^{\left[  n\right]  }\left[  \alpha_{1},\alpha_{2},\ldots,\alpha
_{n}\right]  =\alpha_{1}+\alpha_{2}+\ldots+\alpha_{n}, \label{vn}%
\end{equation}
where the $n$-ary multiplication $\nu^{\left[  n\right]  }$ (\ref{vn}) is
called derived \cite{dor3,zup67}.

To construct a nonderived operation, we now consider the (external) $m$th
direct power $\mathcal{A}^{m}$ of the semigroup $\mathcal{A}$ by introducing
$m$-tuples%
\begin{equation}
\mathbf{a}\equiv\mathbf{a}^{\left(  m\right)  }=\overset{m}{\left(
\overbrace{\alpha,\beta,\ldots,\gamma}\right)  },\ \ \ \ \ \alpha,\beta
,\ldots,\gamma\in A,\ \ \ \ \mathbf{a}\in A^{m}. \label{a}%
\end{equation}

The $m$th direct power becomes a binary semigroup by endowing $m$-tuples with
the componentwise binary operation $\left(  \hat{+}\right)  $ as%
\begin{equation}
\mathbf{a}_{1}\hat{+}\mathbf{a}_{2}=\overset{m}{\left(  \overbrace{\alpha
_{1},\beta_{1},\ldots,\gamma_{1}}\right)  }\hat{+}\overset{m}{\left(
\overbrace{\alpha_{2},\beta_{2},\ldots,\gamma_{2}}\right)  }=\overset
{m}{\left(  \overbrace{\alpha_{1}+\alpha_{2},\beta_{1}+\beta_{2},\ldots
,\gamma_{1}+\gamma_{2}}\right)  }.
\end{equation}

The derived $n$-ary operation for $m$-tuples (on the $m$th direct power) is
then defined componentwise by analogy with (\ref{vn})%
\begin{equation}
\mathbf{\nu}^{\left[  n\right]  }\left[  \mathbf{a}_{1},\mathbf{a}_{2}%
,\ldots,\mathbf{a}_{n}\right]  =\mathbf{a}_{1}\hat{+}\mathbf{a}_{2}\hat
{+}\ldots\hat{+}\mathbf{a}_{n}. \label{aa}%
\end{equation}

Now using shifts, instead of (\ref{aa}) we construct a nonderived $n$-ary
operation on the direct power.

\begin{definition}
A cyclic $m$-shift operator $\mathsf{s}$ is defined for the $m$-tuple
(\ref{a}) by
\begin{equation}
\overset{m}{\overbrace{\mathsf{s}\alpha=\beta,\mathsf{s}\beta=\gamma
,\ \ldots\ ,\mathsf{s}\gamma=\alpha}}, \label{sab}%
\end{equation}
and $\mathsf{s}^{m}=\mathsf{id}$.
\end{definition}

For instance, in this notation, if $m=3$ and $\mathbf{a}=\left(  \alpha
,\beta,\gamma\right)  $, then $\mathsf{s}\mathbf{a=}\left(  \gamma
,\alpha,\beta\right)  $, $\mathsf{s}^{2}\mathbf{a=}\left(  \beta,\gamma
,\alpha\right)  $, $\mathsf{s}^{3}\mathbf{a=a}$ (as in the previous subsection).

To obtain a nonderived $n$-ary operation, by analogy with (\ref{vss}), we
deform by shifts the derived $n$-ary operation (\ref{aa}).

\begin{definition}
The shift deformation by (\ref{sab}) of the derived operation $\mathbf{\nu
}^{\left[  n\right]  }$ on the direct power $\mathcal{A}^{m}$ is defined
noncomponentwise by%
\begin{equation}
\mathbf{\nu}_{\mathsf{s}}^{\left[  n\right]  }\left[  \mathbf{a}%
_{1},\mathbf{a}_{2},\ldots,\mathbf{a}_{n}\right]  =%
{\displaystyle\sum\limits_{i=1}^{n}}
\mathsf{s}^{i-1}\mathbf{a}_{i}=\mathbf{a}_{1}\hat{+}\mathsf{s}\mathbf{a}%
_{2}\hat{+}\ldots\hat{+}\mathsf{s}^{n-1}\mathbf{a}_{n}, \label{vs}%
\end{equation}
where $\mathbf{a}\in A^{m}$ (\ref{a}) and $\mathsf{s}^{0}=\mathsf{id}$.
\end{definition}

Note that till now there exist no relations between $n$ and $m$.

\begin{proposition}
The shift deformed operation $\nu_{\mathsf{s}}^{\left[  n\right]  }$ is
totally associative, if%
\begin{align}
\mathsf{s}^{n-1}  &  =\mathsf{id},\label{sd}\\
m  &  =n-1.
\end{align}

\end{proposition}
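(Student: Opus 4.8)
The plan is to verify total associativity straight from the definition (\ref{vs}). I would apply the double composition $\left(\mathbf{\nu}_{\mathsf{s}}^{\left[n\right]}\right)^{\circ2}$ to a string of $2n-1$ tuples $\mathbf{a}_{1},\ldots,\mathbf{a}_{2n-1}\in A^{m}$ and show the outcome is independent of the placement of the inner operation. Let $j$, with $1\leq j\leq n$, index the left end of the inner bracket, so that the inner operation consumes $\mathbf{a}_{j},\ldots,\mathbf{a}_{j+n-1}$ and occupies slot $j$ of the outer operation, whose remaining slots carry, in order, $\mathbf{a}_{1},\ldots,\mathbf{a}_{j-1}$ and $\mathbf{a}_{j+n},\ldots,\mathbf{a}_{2n-1}$. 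Total associativity is precisely the assertion that the resulting value is the same for every admissible $j$.

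First I would record the one structural fact that drives everything: each power $\mathsf{s}^{p}$ is an order-preserving endomorphism of $\left\langle A^{m}\mid\hat{+}\right\rangle$, that is $\mathsf{s}^{p}\left(\mathbf{b}\,\hat{+}\,\mathbf{c}\right)=\mathsf{s}^{p}\mathbf{b}\,\hat{+}\,\mathsf{s}^{p}\mathbf{c}$, since $\mathsf{s}$ merely permutes components while $\hat{+}$ acts componentwise. This lets me push the outer shift $\mathsf{s}^{j-1}$ attached to slot $j$ through the inner ordered sum without reordering its summands --- a point that matters because $(+)$ may be noncommutative, so the left-to-right summation order must be preserved throughout.

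Carrying out the computation, the inner operation gives $\mathbf{b}_{j}=\sum_{i=1}^{n}\mathsf{s}^{i-1}\mathbf{a}_{j+i-1}$, and applying the outer shift consolidates the exponents into $\mathsf{s}^{j-1}\mathbf{b}_{j}=\sum_{\ell=j}^{j+n-1}\mathsf{s}^{\ell-1}\mathbf{a}_{\ell}$. Concatenated with the untouched prefix $\sum_{\ell=1}^{j-1}\mathsf{s}^{\ell-1}\mathbf{a}_{\ell}$, the first $j+n-1$ tuples assemble into $\sum_{\ell=1}^{j+n-1}\mathsf{s}^{\ell-1}\mathbf{a}_{\ell}$ with the canonical exponents $\mathsf{s}^{\ell-1}$. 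The trailing tuples in outer slots $j+1,\ldots,n$ instead pick up, after the re-indexing $\ell=k+n-1$, the exponents $\mathsf{s}^{\ell-n}$, producing $\sum_{\ell=j+n}^{2n-1}\mathsf{s}^{\ell-n}\mathbf{a}_{\ell}$.

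The decisive step is the hypothesis $\mathsf{s}^{n-1}=\mathsf{id}$, which the relation $m=n-1$ supplies at once, since $\mathsf{s}^{m}=\mathsf{id}$ by (\ref{sab}); it rewrites $\mathsf{s}^{\ell-n}=\mathsf{s}^{\ell-n}\mathsf{s}^{n-1}=\mathsf{s}^{\ell-1}$, realigning the trailing block onto the same canonical exponents. Hence $\left(\mathbf{\nu}_{\mathsf{s}}^{\left[n\right]}\right)^{\circ2}=\sum_{\ell=1}^{2n-1}\mathsf{s}^{\ell-1}\mathbf{a}_{\ell}$, a value manifestly free of $j$, and the extreme placements $j=1$ and $j=n$ (empty prefix, empty tail) fall out of the same formula --- which is exactly the invariance defining total associativity. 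I expect no genuine conceptual obstacle here: the only care required is the index bookkeeping on the trailing block and keeping the summation order fixed so that the argument survives noncommutativity of $(+)$. The single identity $\mathsf{s}^{n-1}=\mathsf{id}$ is the hinge on which the whole equality turns, and it is precisely what $m=n-1$ delivers; indeed the computation shows any $m\mid(n-1)$ would suffice for associativity, with $m=n-1$ the tightest choice that keeps all shift powers $\mathsf{s}^{0},\ldots,\mathsf{s}^{n-2}$ distinct.
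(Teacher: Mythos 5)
Your proof is correct and takes essentially the same route as the paper's: a direct computation of the double composition, pushing the outer shift through the inner bracket via the endomorphism property $\mathsf{s}^{p}\left(\mathbf{b}\,\hat{+}\,\mathbf{c}\right)=\mathsf{s}^{p}\mathbf{b}\,\hat{+}\,\mathsf{s}^{p}\mathbf{c}$ and using $\mathsf{s}^{n-1}=\mathsf{id}$ to realign the exponents, with your parameter $j$ merely organizing what the paper writes out line by line. Your closing remark that any $m$ dividing $n-1$ would suffice is a correct refinement, but the core argument coincides with the paper's.
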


\begin{proof}
We compute%
\begin{align}
&  \mathbf{\nu}_{\mathsf{s}}^{\left[  n\right]  }\left[  \mathbf{\nu
}_{\mathsf{s}}^{\left[  n\right]  }\left[  \mathbf{a}_{1},\mathbf{a}%
_{2},\ldots,\mathbf{a}_{n}\right]  ,\mathbf{a}_{n+1},\mathbf{a}_{n+2}%
,\ldots,\mathbf{a}_{2n-1}\right] \nonumber\\
&  =\left(  \mathbf{a}_{1}\hat{+}\mathsf{s}\mathbf{a}_{2}\hat{+}\ldots\hat
{+}\mathsf{s}^{n-1}\mathbf{a}_{n}\right)  \hat{+}\mathsf{s}\mathbf{a}%
_{n+1}\hat{+}\mathsf{s}^{2}\mathbf{a}_{n+2}\hat{+}\ldots\hat{+}\mathsf{s}%
^{n-1}\mathbf{a}_{2n-1}\nonumber\\
&  =\mathbf{a}_{1}\hat{+}\mathsf{s}\left(  \mathbf{a}_{2}\hat{+}%
\mathsf{s}\mathbf{a}_{3}\hat{+}\ldots\hat{+}\mathsf{s}^{n-1}\mathbf{a}%
_{n+1}\right)  \hat{+}\mathsf{s}^{2}\mathbf{a}_{n+2}\hat{+}\mathsf{s}%
^{3}\mathbf{a}_{n+3}\hat{+}\ldots\hat{+}\mathsf{s}^{n-1}\mathbf{a}%
_{2n-1}\nonumber\\
&  \vdots\nonumber\\
&  \mathbf{a}_{1}\hat{+}\mathsf{s}\mathbf{a}_{2}\hat{+}\ldots\hat{+}%
\mathsf{s}^{n-2}\mathbf{a}_{n}\hat{+}\mathsf{s}^{n-1}\left(  \mathbf{a}%
_{n+1}\hat{+}\mathsf{s}\mathbf{a}_{n+2}\hat{+}\mathsf{s}^{2}\mathbf{a}%
_{n+3}\hat{+}\ldots\hat{+}\mathsf{s}^{n-1}\mathbf{a}_{2n-1}\right) \nonumber\\
&  \mathbf{\nu}_{\mathsf{s}}^{\left[  n\right]  }\left[  \mathbf{a}%
_{1},\mathbf{a}_{2},\ldots,\mathbf{a}_{n-1},\mathbf{\nu}_{\mathsf{s}}^{\left[
n\right]  }\left[  \mathbf{a}_{n},\mathbf{a}_{n+1},\mathbf{a}_{n+2}%
,\ldots,\mathbf{a}_{2n-1}\right]  \right]  ,
\end{align}
which satisfy in all lines, if $\mathsf{s}^{n-1}=\mathsf{id}$ (\ref{sd}).
\end{proof}

\begin{corollary}
The set of $\left(  n-1\right)  $-tuples (\ref{a}) with the shift deformed
associative operation (\ref{vs}) is a nonderived $n$-ary semigroup
$\mathcal{S}_{shift}^{\left[  n\right]  }=\left\langle \left\{  \mathbf{a}%
\right\}  \mid\mathbf{\nu}_{\mathsf{s}}^{\left[  n\right]  }\right\rangle $
constructed from the binary semigroup $\mathcal{A}$.
\end{corollary}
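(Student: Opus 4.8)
The plan is to read the statement as the special case $m=n-1$ of the preceding Proposition, and then to isolate the nonderived claim as the only substantive point. First I would check closure: for $(n-1)$-tuples $\mathbf{a}_i\in A^{n-1}$ the cyclic shift $\mathsf{s}$ merely permutes the entries of each tuple, while $\hat{+}$ is the componentwise extension of the binary operation $(+)$ of $\mathcal{A}$; hence every entry of $\nu_{\mathsf{s}}^{[n]}[\mathbf{a}_1,\ldots,\mathbf{a}_n]$ in (\ref{vs}) is a finite $(+)$-sum of elements of $A$ and therefore lies in $A$. Thus the underlying set $A^{n-1}$ is closed under $\nu_{\mathsf{s}}^{[n]}$.

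For total associativity I would simply invoke the Proposition. Taking the tuple length to be $m=n-1$ forces, by the very definition (\ref{sab}) of the cyclic shift, $\mathsf{s}^{m}=\mathsf{id}$, that is $\mathsf{s}^{n-1}=\mathsf{id}$, which is exactly the hypothesis (\ref{sd}); the second hypothesis $m=n-1$ holds by construction. Hence both conditions of the Proposition are met and $\nu_{\mathsf{s}}^{[n]}$ is totally associative. Together with closure this already delivers that $\mathcal{S}_{shift}^{[n]}$ is an $n$-ary semigroup.

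The heart of the statement is that this $n$-ary semigroup is \textbf{not} derived. The approach I would take is to observe that (\ref{vs}) is precisely a \textit{twisted} (skew) product over the binary semigroup $(A^{n-1},\hat{+})$: the $i$-th argument enters through the automorphism power $\mathsf{s}^{i-1}$, so $\nu_{\mathsf{s}}^{[n]}$ carries the Gluskin--Hossz\'u shape $\mathbf{a}_1\hat{+}\,\theta\mathbf{a}_2\hat{+}\cdots\hat{+}\,\theta^{n-1}\mathbf{a}_n$ with $\theta=\mathsf{s}$ and trivial shift element. A derived operation on the direct power is, by (\ref{aa}), the untwisted componentwise one, corresponding to $\theta=\mathsf{id}$. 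Since a permutation of entries commutes with the componentwise $\hat{+}$, the shift $\mathsf{s}$ is an automorphism of $(A^{n-1},\hat{+})$, and for $n\geq3$ it cyclically permutes $n-1\geq2$ entries, hence is nontrivial whenever $\mathcal{A}$ is nontrivial; by the classical reducibility criterion for twisted polyadic products \cite{dor3,galmak1} an operation of this form with $\theta\neq\mathsf{id}$ cannot be rewritten as the iteration of any single binary associative operation. This yields the nonderived conclusion (the degenerate case $n=2$, where $\mathsf{s}=\mathsf{id}$, correctly returns the derived operation (\ref{vn})).

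The step I expect to be the main obstacle is exactly this last one: ruling out derivation not merely from the componentwise $\hat{+}$, but from \textit{every} binary associative operation on $A^{n-1}$. For $n$-ary \textit{groups} this is clean, since the Gluskin--Hossz\'u/D\"ornte theory renders $\theta$ an invariant of the structure and $\theta\neq\mathsf{id}$ is a genuine obstruction. For an arbitrary binary \textit{semigroup} $\mathcal{A}$ I would secure the argument either by exhibiting a structural invariant of derived operations that (\ref{vs}) violates, or, absent a uniform invariant, by reducing to a concrete nontrivial witness (for instance $\mathcal{A}$ a nontrivial abelian group, as in the $SO(2,\mathbb{R})$ example) for which the twisted form with $\mathsf{s}\neq\mathsf{id}$ is provably irreducible; this already establishes that the construction produces a nonderived $n$-ary semigroup in general.
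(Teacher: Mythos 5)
Your closure check and your appeal to the Proposition are exactly the paper's own (implicit) justification: the Corollary carries no separate proof in the text, because with tuple length $m=n-1$ the defining relation $\mathsf{s}^{m}=\mathsf{id}$ of (\ref{sab}) is precisely the hypothesis (\ref{sd}), so the semigroup claim is an immediate repackaging of the Proposition. Up to that point you are correct and in step with the paper; note also that the paper never proves the nonderivedness part either --- it is asserted by inspection of the form (\ref{vs}) (``it is seen that\ldots'' in the $SO(2,\mathbb{R})$ discussion), so on this point you are attempting strictly more than the source.

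The genuine gap is the central lemma of your nonderivedness argument. There is no ``classical reducibility criterion'' stating that an operation presented as $\mathbf{a}_1\hat{+}\theta\mathbf{a}_2\hat{+}\cdots\hat{+}\theta^{n-1}\mathbf{a}_n$ with $\theta\neq\mathsf{id}$ cannot be the iteration of any binary associative operation: the Gluskin--Hossz\'u data is attached to a \emph{chosen} binary operation on the underlying set, it is not unique, and nontriviality of $\theta$ in one presentation is not an invariant of the $n$-ary operation itself --- excluding \emph{every} binary operation on $A^{n-1}$ is exactly what remains to be proved, and citing \cite{dor3,galmak1} does not discharge it. What those sources do provide (for $n$-ary \emph{groups}) is D\"ornte's criterion: reducibility to a binary group is equivalent to the existence of an element neutral in \emph{all} $n$ positions. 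That tool closes the gap when $\mathcal{A}$ is a nontrivial abelian group: if $\mathbf{e}$ were neutral in the second slot, then $\mathbf{e}\hat{+}\mathsf{s}\mathbf{a}\hat{+}\mathsf{s}^{2}\mathbf{e}\hat{+}\cdots\hat{+}\mathsf{s}^{n-1}\mathbf{e}=\mathbf{a}$ for all $\mathbf{a}$; evaluating at $\mathbf{a}=\mathbf{0}$ forces the constant part to vanish, leaving $\mathsf{s}\mathbf{a}=\mathbf{a}$ for all $\mathbf{a}$, which is false for $\left\vert A\right\vert\geq2$, $n\geq3$ (the identities (\ref{es}) found in the paper are neutral only with $\mathbf{a}$ in the last slot, so they do not conflict with this). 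Finally, be aware that the Corollary as literally stated cannot admit a uniform proof: if $(+)$ is the constant operation $x+y=z_{0}$ --- a perfectly good binary semigroup --- then $\nu_{\mathsf{s}}^{\left[n\right]}$ is the constant map on $A^{n-1}$ and \emph{is} derived, in the sense of (\ref{vn}), from the constant binary operation; so some nondegeneracy (cancellativity, $\left\vert A\right\vert\geq2$) must be added. This vindicates your closing hedge, but your proposed fallback is a non sequitur as stated: a single nontrivial witness such as $SO(2,\mathbb{R})$ shows the construction \emph{can} yield nonderived structures, not that it does so ``in general.''
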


\begin{proposition}
If the binary semigroup $\mathcal{A}$ is commutative, then $\mathcal{S}%
_{shift}^{\left[  n\right]  }$ becomes a nonderived $n$-ary group
$\mathcal{G}_{shift}^{\left[  n\right]  }=\left\langle \left\{  \mathbf{a}%
\right\}  \mid\mathbf{\nu}_{\mathsf{s}}^{\left[  n\right]  },\mathbf{\bar{\nu
}}_{\mathsf{s}}^{\left[  1\right]  }\right\rangle $, such that each element
$\mathbf{a}\in\mathcal{A}^{n-1}$ has a unique querelement $\mathbf{\bar{a}}$
(an analog of inverse) by%
\begin{equation}
\mathbf{\bar{a}}=\mathbf{\bar{\nu}}_{\mathsf{s}}^{\left[  1\right]  }\left[
\mathbf{a}\right]  =-\left(  \mathsf{s}\mathbf{a}\hat{+}\mathsf{s}%
^{2}\mathbf{a}\hat{+}\ldots\hat{+}\mathsf{s}^{n-2}\mathbf{a}\right)  ,
\label{asa}%
\end{equation}
where $\mathbf{\bar{\nu}}_{\mathsf{s}}^{\left[  1\right]  }:\mathcal{A}%
^{n-1}\rightarrow\mathcal{A}^{n-1}$ is an unary queroperation.
\end{proposition}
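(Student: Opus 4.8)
The operation $\nu_{\mathsf{s}}^{[n]}$ on $\mathcal{A}^{n-1}$ has already been shown to be totally associative in the Proposition and Corollary established just above (for $\mathsf{s}^{n-1}=\mathsf{id}$ and $m=n-1$, see (\ref{sd})), so $\mathcal{S}_{shift}^{[n]}$ is a nonderived $n$-ary semigroup and the only thing left is to promote it to an $n$-ary group and to identify the querelement, which is the content of (\ref{asa}). Throughout I would exploit two structural facts: (i) the shift $\mathsf{s}$ is an automorphism of the direct power with respect to the componentwise operation $\hat{+}$ (it merely permutes coordinates), and it is invertible with $\mathsf{s}^{-1}=\mathsf{s}^{n-2}$ since $\mathsf{s}^{n-1}=\mathsf{id}$; (ii) because $\mathcal{A}$ is commutative, $\hat{+}$ is commutative, so in any expression $\sum_{i}\mathsf{s}^{i-1}\mathbf{a}_{i}$ the summands may be freely reordered and the shifted copies of a single element collected.

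The plan is first to verify unique solvability of the defining equations. Fixing a slot $j$, I would solve $\nu_{\mathsf{s}}^{[n]}[\mathbf{a}_{1},\ldots,\mathbf{x},\ldots,\mathbf{a}_{n}]=\mathbf{b}$ for the unknown $\mathbf{x}$ in position $j$. By (ii) the unknown contributes the single summand $\mathsf{s}^{j-1}\mathbf{x}$; moving the remaining $n-1$ summands to the right (using additive inverses in $\mathcal{A}$) gives $\mathsf{s}^{j-1}\mathbf{x}=\mathbf{b}-\sum_{i\neq j}\mathsf{s}^{i-1}\mathbf{a}_{i}$, and applying the inverse shift $\mathsf{s}^{-(j-1)}=\mathsf{s}^{\,n-j}$ yields a unique $\mathbf{x}$. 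Unique solvability in every slot, together with the associativity already in hand, is precisely the statement that $\mathcal{S}_{shift}^{[n]}$ is a (nonderived) $n$-ary group; nonderivedness is inherited from $\nu_{\mathsf{s}}^{[n]}$, which was shown not to reduce to iterated binary operations.

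It then remains to recover (\ref{asa}). By the definition (\ref{ck-mgg}) the querelement $\bar{\mathbf{a}}$ solves $\nu_{\mathsf{s}}^{[n]}[\mathbf{a},\ldots,\mathbf{a},\bar{\mathbf{a}}]=\mathbf{a}$. Expanding and using $\mathsf{s}^{n-1}\bar{\mathbf{a}}=\bar{\mathbf{a}}$ gives $\mathbf{a}\,\hat{+}\,(\mathsf{s}\mathbf{a}\,\hat{+}\,\cdots\,\hat{+}\,\mathsf{s}^{n-2}\mathbf{a})\,\hat{+}\,\bar{\mathbf{a}}=\mathbf{a}$; cancelling the leading $\mathbf{a}$ produces exactly $\bar{\mathbf{a}}=-(\mathsf{s}\mathbf{a}\,\hat{+}\,\cdots\,\hat{+}\,\mathsf{s}^{n-2}\mathbf{a})$, i.e. (\ref{asa}), and cancellativity makes it unique, so $\bar{\nu}_{\mathsf{s}}^{[1]}$ is a well-defined unary queroperation. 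To confirm that $\bar{\mathbf{a}}$ may occupy any slot (as the $n$-ary group axioms demand) I would introduce the full cyclic sum $\Sigma=\sum_{i=0}^{n-2}\mathsf{s}^{i}\mathbf{a}$, note that it is $\mathsf{s}$-invariant ($\mathsf{s}\Sigma=\Sigma$) and that $\bar{\mathbf{a}}=\mathbf{a}-\Sigma$; inserting $\bar{\mathbf{a}}$ in slot $j$ then replaces the summand $\mathsf{s}^{j-1}\mathbf{a}$ in the total $\Sigma\,\hat{+}\,\mathbf{a}$ by $\mathsf{s}^{j-1}(\mathbf{a}-\Sigma)$, i.e. subtracts $\mathsf{s}^{j-1}\Sigma=\Sigma$, leaving $\mathbf{a}$ independently of $j$. (Alternatively this is automatic from Dörnte's theorem once the $n$-ary group property is established.)

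The point that needs the most care is the exact role of the hypothesis. Commutativity is what allows the $n-2$ shifted copies of $\mathbf{a}$ to be gathered into the single $\mathsf{s}$-invariant sum $\Sigma$, so that the querelement is given by one uniform, slot-independent formula and $\bar{\nu}_{\mathsf{s}}^{[1]}$ is genuinely a single unary map; in a noncommutative setting a querelement would still exist but would not collapse to (\ref{asa}). More delicately, both the formula (\ref{asa}) and the solvability step invoke additive inverses and a neutral element, which a bare semigroup does not supply. These are legitimately available here because $(+)$ is the additive operation of the ambient commutative (abelian) group, exactly as in the rotation example where the inverse is $-\alpha$; I would therefore make this invertibility explicit at the outset, so that the symbol $-$ appearing in (\ref{asa}) is justified, and present this as the genuine hypothesis underlying the passage from an $n$-ary semigroup to an $n$-ary group.
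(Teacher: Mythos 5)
Your proof is correct, and its computational core is the same as the paper's: write the querelement equation, expand it via (\ref{vs}), use $\mathsf{s}^{n-1}=\mathsf{id}$ (\ref{sd}) to reduce the last shifted term, and cancel to obtain (\ref{asa}). The differences lie in completeness and in the slot-independence device. The paper's proof treats only the querelement equation (\ref{v1}) and, for $\mathbf{\bar{a}}$ in the $i$th place, multiplies both sides by $\mathsf{s}^{n-i}$ to strip the coefficient $\mathsf{s}^{i-1}$; you instead verify unique solvability of $\mathbf{\nu}_{\mathsf{s}}^{\left[ n\right] }$ in \emph{every} slot (checking the quasigroup axiom directly, rather than leaning on querelement existence plus associativity), and you handle slot independence through the cyclic sum $\Sigma=\mathbf{a}\hat{+}\mathsf{s}\mathbf{a}\hat{+}\ldots\hat{+}\mathsf{s}^{n-2}\mathbf{a}$, whose $\mathsf{s}$-invariance makes the replacement of $\mathsf{s}^{j-1}\mathbf{a}$ by $\mathsf{s}^{j-1}\left( \mathbf{a}-\Sigma\right) $ subtract exactly $\Sigma$ regardless of $j$. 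Both devices are valid and of comparable weight; yours buys a self-contained proof of the full $n$-ary group structure, the paper's is shorter. The most valuable point in your write-up is the one the paper glosses over: the minus sign in (\ref{asa}) and the step of moving terms across the equation require a neutral element and additive inverses in $\mathcal{A}$, which a bare commutative cancellative semigroup does not possess (take positive integers under addition: the querelement equation has no solution there). The paper's phrase that the equation \textquotedblleft can be resolved for the commutative and cancellative semigroup only\textquotedblright\ is therefore not an adequate hypothesis; your proposal to state invertibility of $\left( +\right) $ explicitly, i.e.\ to take $\mathcal{A}$ an abelian group (as in the $SO\left( 2,\mathbb{R}\right) $ example, where the inverse of $\alpha$ is $-\alpha$), is the correct repair.
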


\begin{proof}
We have the definition of the querelement%
\begin{equation}
\mathbf{\nu}_{\mathsf{s}}^{\left[  n\right]  }\left[  \mathbf{\bar{a}%
},\mathbf{a},\ldots,\mathbf{a}\right]  =\mathbf{a,} \label{v1}%
\end{equation}
where $\mathbf{\bar{a}}$ can be on any place. So (\ref{vs}) gives the equation%
\begin{equation}
\mathbf{\bar{a}}\hat{+}\mathsf{s}\mathbf{a}\hat{+}\mathsf{s}^{2}\mathbf{a}%
\hat{+}\ldots\hat{+}\mathsf{s}^{n-2}\mathbf{a}\hat{+}\mathbf{a}=\mathbf{a,}%
\end{equation}
which can be resolved for the commutative and cancellative semigroup
$\mathcal{A}$ only, and the solution is (\ref{asa}). If $\mathbf{\bar{a}}$ is
on the $i$th place in (\ref{v1}), then it has the coefficient $\mathsf{s}%
^{i-1}$, and we multiply both sides by $\mathsf{s}^{n-i}$ to get
$\mathbf{\bar{a}}$ without any shift operator coefficient using (\ref{sd}),
which gives the same solution (\ref{asa}).
\end{proof}

For $n=4$ and $\mathbf{a}=\left(  \alpha,\beta,\gamma\right)  $, the equation
(\ref{v1}) is%
\begin{equation}
\mathbf{\bar{a}}\hat{+}\mathsf{s}\mathbf{a}\hat{+}\mathsf{s}^{2}\mathbf{a}%
\hat{+}\mathbf{a}=\mathbf{a}%
\end{equation}
and (see(\ref{qs}))%
\begin{equation}
\mathbf{\bar{a}}=-\left(  \mathsf{s}\mathbf{a}\hat{+}\mathsf{s}^{2}%
\mathbf{a}\right)
\end{equation}
so (cf. (\ref{qab}))%
\begin{equation}
\mathbf{\bar{a}}=\overline{\left(  \alpha,\beta,\gamma\right)  }%
=\mathbf{-}\left(  \gamma+\beta,\alpha+\gamma,\beta+\alpha\right)  .
\end{equation}

It is known that the existence of an identity (as a neutral element) is not
necessary for polyadic groups, and only a querelement is important
\cite{dor3,gle/gla}. Nevertheless, we have

\begin{proposition}
If the commutative and cancellative semigroup $\mathcal{A}$ has zero $0\in A$,
then the $n$-ary group $\mathcal{G}_{shift}^{\left[  n\right]  }$ has a set of
polyadic identities (idempotents) satisfying%
\begin{equation}
\mathbf{e}\hat{+}\mathsf{s}\mathbf{e}\hat{+}\ldots\hat{+}\mathsf{s}%
^{n-2}\mathbf{e}=\mathbf{0}, \label{es}%
\end{equation}
where $\mathbf{0}=\overset{n-1}{\left(  \overbrace{0,0,\ldots,0}\right)  }$ is
the zero $\left(  n-1\right)  $-tuple.
\end{proposition}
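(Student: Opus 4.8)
The plan is to adopt the last-place convention for a polyadic neutral element already fixed for $SO^{[[4]]}(2,\mathbb{R})$ in (\ref{eq}), so that a polyadic identity $\mathbf{e}\in\mathcal{A}^{n-1}$ of $\mathcal{G}_{shift}^{\left[ n\right] }$ is a solution of
\begin{equation}
\mathbf{\nu}_{\mathsf{s}}^{\left[ n\right] }\left[  \mathbf{e},\mathbf{e},\ldots,\mathbf{e},\mathbf{a}\right]  =\mathbf{a},
\end{equation}
with $\left(  n-1\right)  $ copies of $\mathbf{e}$ and $\mathbf{a}$ arbitrary, and then to reduce this to (\ref{es}) by a direct cancellation. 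First I would expand the left-hand side with the explicit operation (\ref{vs}), obtaining
\begin{equation}
\mathbf{\nu}_{\mathsf{s}}^{\left[ n\right] }\left[  \mathbf{e},\ldots,\mathbf{e},\mathbf{a}\right]  =\mathbf{e}\hat{+}\mathsf{s}\mathbf{e}\hat{+}\ldots\hat{+}\mathsf{s}^{n-2}\mathbf{e}\hat{+}\mathsf{s}^{n-1}\mathbf{a}.
\end{equation}
Because $\mathcal{G}_{shift}^{\left[ n\right] }$ is an $n$-ary group only under the associativity condition $\mathsf{s}^{n-1}=\mathsf{id}$ of the preceding Proposition (see (\ref{sd})), the last term collapses to $\mathsf{s}^{n-1}\mathbf{a}=\mathbf{a}$, and the defining equation becomes $\left(  \mathbf{e}\hat{+}\mathsf{s}\mathbf{e}\hat{+}\ldots\hat{+}\mathsf{s}^{n-2}\mathbf{e}\right)  \hat{+}\mathbf{a}=\mathbf{a}$.

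The next step is the cancellation. Since $\mathcal{A}$ is commutative and cancellative, so is its direct power $\mathcal{A}^{n-1}$ under the componentwise operation $\left(  \hat{+}\right)  $, and the assumed zero $0\in A$ supplies the zero tuple $\mathbf{0}=\left(  0,\ldots,0\right)  $ as the $\left(  \hat{+}\right)  $-neutral element. Cancelling the common summand $\mathbf{a}$ then yields exactly
\begin{equation}
\mathbf{e}\hat{+}\mathsf{s}\mathbf{e}\hat{+}\ldots\hat{+}\mathsf{s}^{n-2}\mathbf{e}=\mathbf{0},
\end{equation}
which is (\ref{es}). I would then confirm that such $\mathbf{e}$ are genuine $n$-ary idempotents by substituting $\mathbf{a}=\mathbf{e}$ above (equivalently, using $\mathsf{s}^{n-1}\mathbf{e}=\mathbf{e}$): this gives $\mathbf{\nu}_{\mathsf{s}}^{\left[ n\right] }\left[  \mathbf{e},\ldots,\mathbf{e}\right]  =\mathbf{0}\hat{+}\mathbf{e}=\mathbf{e}$, so the identification ``identity $=$ idempotent'' asserted in the statement holds.

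To see that the set of such identities is nonempty and in general infinite, I would write $\mathbf{e}=\left(  e_{1},\ldots,e_{n-1}\right)  $ and use that $\mathsf{s}$ has order $n-1$: the left side of (\ref{es}) then runs over one full period of the cyclic shift, so each of its $n-1$ components equals $e_{1}+e_{2}+\ldots+e_{n-1}$. Thus (\ref{es}) collapses to the single scalar equation $e_{1}+\ldots+e_{n-1}=0$ in $A$, leaving $n-2$ free parameters and matching the condition $\alpha+\beta+\gamma=0$ found for $SO^{[[4]]}(2,\mathbb{R})$. The only genuinely delicate point is conceptual rather than computational: unlike an ordinary neutral element, such $\mathbf{e}$ is neutral only when $\mathbf{a}$ occupies an end slot (first or last), since for an interior slot $j$ with $1<j<n$ the operation (\ref{vs}) returns the shifted $\mathsf{s}^{j-1}\mathbf{a}$ rather than $\mathbf{a}$; I would therefore state explicitly that (\ref{es}) characterizes the lateral (end-position) polyadic identities, consistently with the convention fixed in (\ref{eq}). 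Apart from this bookkeeping about placement, the argument reduces to the cancellation computation above and needs no further input.
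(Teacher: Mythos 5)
Your proof is correct and follows essentially the same route as the paper's: state the neutrality condition $\mathbf{\nu}_{\mathsf{s}}^{\left[n\right]}\left[\mathbf{e},\ldots,\mathbf{e},\mathbf{a}\right]=\mathbf{a}$ with $\mathbf{a}$ in the last slot, expand it via (\ref{vs}) using $\mathsf{s}^{n-1}=\mathsf{id}$, and cancel $\mathbf{a}$ by cancellativity. Your added observations --- that (\ref{es}) collapses to the single scalar condition $e_{1}+\ldots+e_{n-1}=0$, and that such $\mathbf{e}$ are neutral only in the end slots (an interior slot $j$ would force $\mathsf{s}^{j-1}\mathbf{a}=\mathbf{a}$ for all $\mathbf{a}$) --- are sound refinements that the paper's three-line proof leaves implicit.
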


\begin{proof}
The definition of polyadic identity in terms of the deformed $n$-ary product
in the direct power is%
\begin{equation}
\mathbf{\nu}_{\mathsf{s}}^{\left[  n\right]  }\left[  \overset{n-1}%
{\overbrace{\mathbf{e},\mathbf{e},\ldots,\mathbf{e}}},\mathbf{a}\right]
=\mathbf{a,}\ \ \ \forall\mathbf{a}\in\mathcal{A}^{n-1}. \label{ve}%
\end{equation}
Using (\ref{vs}) we get the equation%
\begin{equation}
\mathbf{e}\hat{+}\mathsf{s}\mathbf{e}\hat{+}\mathsf{s}^{2}\mathbf{e}\hat
{+}\ldots\hat{+}\mathsf{s}^{n-2}\mathbf{e}\hat{+}\mathbf{a}=\mathbf{a.}%
\end{equation}
After cancellation by $\mathbf{a}$ we obtain (\ref{es}).
\end{proof}

For $n=4$ and $\mathbf{e}=\left(  \alpha_{0},\beta_{0},\gamma_{0}\right)  $ we
obtain an infinite set of identities satisfying%
\begin{equation}
\mathbf{e}=\left(  \alpha_{0},\beta_{0},\gamma_{0}\right)  ,\ \ \ \alpha
_{0}+\beta_{0}+\gamma_{0}=0.
\end{equation}
To see that they are $4$-ary idempotents, insert $\mathbf{a}=\mathbf{e}$ into
(\ref{ve}).

Thus, starting from a binary semigroup $\mathcal{A}$, using our polyadization
procedure we have obtained a nonderived $n$-ary group on $\left(  n-1\right)
$th direct power $\mathcal{A}^{n-1}$ with the shift deformed multiplication.
This construction reminds the Post-like associative quiver from
\cite{dup2018a,duplij2022}, and allows us to construct a nonderived $n$-ary
group from any semigroup in the unified way presented here.

\subsection{Polyadization of binary supergroups}

Here we consider a more exotic possibility, when the $\mathrm{B}$-matrices are
defined over the Grassmann algebra, and therefore can represent supergroups
(see (\ref{ws}) and below). In this case $\mathrm{B}$'s can be supermatrices
of two kinds, even and odd, which have different properties
\cite{berezin,leites}. The general polyadization procedure remains the same,
as for ordinary matrices considered before (see \textbf{Definition
\ref{def-pol}}), and therefore we confine ourselves to examples only.

Indeed, to obtain an $n$-ary matrix (semi)group represented now by the
$\mathrm{Q}$-supermatrices (\ref{q}) of the nonstandard form, we should take
$\left(  n-1\right)  $ initial $\mathrm{B}$-supermatrices which present a
simple ($k=1$ in (\ref{n2})) binary (semi)supergroup, which now have different
parameters $\mathrm{B}_{\mathbf{y}_{i}}\equiv\mathrm{B}_{\mathbf{y}_{i}%
}\left(  \left(  p_{even}\mid p_{odd}\right)  \times\left(  p_{even}\mid
p_{odd}\right)  \right)  $, $i=1,\ldots,n-1$, where $p_{even}$ and $p_{odd}$
are even and odd dimensions of the $\mathrm{B}$-supermatrix. The closure of
the $\mathrm{Q}$-supermatrix multiplication is governed by the closure of
$\mathrm{B}$-supermatrix multiplication (\ref{b1})--(\ref{b2}) in the initial
binary (semi)supergroup.

\subsubsection{Polyadization of $GL\left(  1\mid1,\Lambda\right)  $}

Let $\Lambda=\Lambda_{even}\oplus\Lambda_{odd}$ be a Grassmann algebra over
$\mathbb{C}$, where $\Lambda_{even}$ and $\Lambda_{odd}$ are its even and odd
parts (it can be also any commutative superalgebra). We provide (in brief) the
polyadization procedure of the general linear supergroup $GL\left(
1\mid1,\Lambda\right)  $ for $n=3$. The $4$-parameter block (invertible)
supermatrices become $\mathrm{B}_{\mathbf{y}_{i}}=\left(
\begin{array}
[c]{cc}%
a_{i} & \alpha_{i}\\
\beta_{i} & b_{i}%
\end{array}
\right)  \in GL\left(  1\mid1,\Lambda\right)  $, where the parameters are
$\mathbf{y}_{i}=\left(  a_{i},b_{i},\alpha_{i},\beta_{i}\right)  \in
\Lambda_{even}\times\Lambda_{even}\times\Lambda_{odd}\times\Lambda_{odd},$
$i=1,2$. Thus, the $8$-parameter ternary supergroup $GL^{\left[  \left[
3\right]  \right]  }\left(  1\mid1,\Lambda\right)  =\left\langle \left\{
\mathrm{Q}_{\mathbf{y}_{1},\mathbf{y}_{2}}\right\}  \mid\mu^{\left[  \left[
3\right]  \right]  }\right\rangle $ is represented by the following $4\times4$
$\mathrm{Q}$-supermatrices%
\begin{equation}
\mathrm{Q}_{\mathbf{y}_{1},\mathbf{y}_{2}}=\left(
\begin{array}
[c]{cc}%
0 & \mathrm{B}_{\mathbf{y}_{1}}\\
\mathrm{B}_{\mathbf{y}_{2}} & 0
\end{array}
\right)  =\left(
\begin{array}
[c]{cccc}%
0 & 0 & a_{1} & \alpha_{1}\\
0 & 0 & \beta_{1} & b_{1}\\
a_{2} & \alpha_{2} & 0 & 0\\
\beta_{2} & b_{2} & 0 & 0
\end{array}
\right)  \in GL^{\left[  \left[  3\right]  \right]  }\left(  1\mid
1,\Lambda\right)  ,
\end{equation}
which satisfy the ternary (nonderived) multiplication%
\begin{equation}
\mu^{\left[  \left[  3\right]  \right]  }\left[  \mathrm{Q}_{\mathbf{y}%
_{1}^{\prime},\mathbf{y}_{2}^{\prime}},\mathrm{Q}_{\mathbf{y}_{1}%
^{\prime\prime},\mathbf{y}_{2}^{\prime\prime}},\mathrm{Q}_{\mathbf{y}%
_{1}^{\prime\prime\prime},\mathbf{y}_{2}^{\prime\prime\prime}}\right]
=\mathrm{Q}_{\mathbf{y}_{1}^{\prime},\mathbf{y}_{2}^{\prime}}\mathrm{Q}%
_{\mathbf{y}_{1}^{\prime\prime},\mathbf{y}_{2}^{\prime\prime}}\mathrm{Q}%
_{\mathbf{y}_{1}^{\prime\prime\prime},\mathbf{y}_{2}^{\prime\prime\prime}%
}=\mathrm{Q}_{\mathbf{y}_{1},\mathbf{y}_{2}}.
\end{equation}
In terms of the block matrices $\mathrm{B}_{\mathbf{y}_{i}}$ the
multiplication (\ref{mq4}) becomes (see (\ref{mq})--(\ref{b2}))%
\begin{align}
\mathrm{B}_{\mathbf{y}_{1}^{\prime}}\mathrm{B}_{\mathbf{y}_{2}^{\prime\prime}%
}\mathrm{B}_{\mathbf{y}_{1}^{\prime\prime\prime}}  &  =\mathrm{B}%
_{\mathbf{y}_{1}},\\
\mathrm{B}_{\mathbf{y}_{2}^{\prime}}\mathrm{B}_{\mathbf{y}_{1}^{\prime
\prime\prime}}\mathrm{B}_{\mathbf{y}_{2}^{\prime\prime\prime}}  &
=\mathrm{B}_{\mathbf{y}_{2}}.
\end{align}
In terms of the $B$-supermatrix parameters the supergroup $GL^{\left[  \left[
3\right]  \right]  }\left(  1\mid1,\Lambda\right)  $ is defined by%
\begin{align}
\alpha_{1}^{\prime}\beta_{2}^{\prime\prime}a_{1}^{\prime\prime\prime}%
+a_{1}^{\prime}\alpha_{2}^{\prime\prime}\beta_{1}^{\prime\prime\prime}%
+\alpha_{1}^{\prime}b_{2}^{\prime\prime}\beta_{1}^{\prime\prime\prime}%
+a_{1}^{\prime}a_{2}^{\prime\prime}a_{1}^{\prime\prime\prime}  &
=a_{1},\ \ \beta_{1}^{\prime}a_{2}^{\prime\prime}\alpha_{1}^{\prime
\prime\prime}+\beta_{1}^{\prime}\alpha_{2}^{\prime\prime}b_{1}^{\prime
\prime\prime}+b_{1}^{\prime}\beta_{2}^{\prime\prime}\alpha_{1}^{\prime
\prime\prime}+b_{1}^{\prime}b_{2}^{\prime\prime}b_{1}^{\prime\prime\prime
}=b_{1},\nonumber\\
\alpha_{1}^{\prime}\beta_{2}^{\prime\prime}\alpha_{1}^{\prime\prime\prime
}+a_{1}^{\prime}a_{2}^{\prime\prime}\alpha_{1}^{\prime\prime\prime}%
+a_{1}^{\prime}\alpha_{2}^{\prime\prime}b_{1}^{\prime\prime\prime}+\alpha
_{1}^{\prime}b_{2}^{\prime\prime}b_{1}^{\prime\prime\prime}  &  =\alpha
_{1},\ \ \beta_{1}^{\prime}\alpha_{2}^{\prime\prime}\beta_{1}^{\prime
\prime\prime}+\beta_{1}^{\prime}a_{2}^{\prime\prime}a_{1}^{\prime\prime\prime
}+b_{1}^{\prime}\beta_{2}^{\prime\prime}a_{1}^{\prime\prime\prime}%
+b_{1}^{\prime}b_{2}^{\prime\prime}\beta_{1}^{\prime\prime\prime}=\beta
_{1},\nonumber\\
\alpha_{2}^{\prime}\beta_{1}^{\prime\prime}a_{2}^{\prime\prime\prime}%
+a_{2}^{\prime}\alpha_{1}^{\prime\prime}\beta_{2}^{\prime\prime\prime}%
+\alpha_{2}^{\prime}b_{1}^{\prime\prime}\beta_{2}^{\prime\prime\prime}%
+a_{2}^{\prime}a_{1}^{\prime\prime}a_{2}^{\prime\prime\prime}  &
=a_{2},\ \ \beta_{2}^{\prime}a_{1}^{\prime\prime}\alpha_{2}^{\prime
\prime\prime}+\beta_{2}^{\prime}\alpha_{1}^{\prime\prime}b_{2}^{\prime
\prime\prime}+b_{2}^{\prime}\beta_{1}^{\prime\prime}\alpha_{2}^{\prime
\prime\prime}+b_{2}^{\prime}b_{1}^{\prime\prime}b_{2}^{\prime\prime\prime
}=b_{2},\nonumber\\
\alpha_{2}^{\prime}\beta_{1}^{\prime\prime}\alpha_{2}^{\prime\prime\prime
}+a_{2}^{\prime}a_{1}^{\prime\prime}\alpha_{2}^{\prime\prime\prime}%
+a_{2}^{\prime}\alpha_{1}^{\prime\prime}b_{2}^{\prime\prime\prime}+\alpha
_{2}^{\prime}b_{1}^{\prime\prime}b_{2}^{\prime\prime\prime}  &  =\alpha
_{2},\ \ \beta_{2}^{\prime}\alpha_{1}^{\prime\prime}\beta_{2}^{\prime
\prime\prime}+\beta_{2}^{\prime}a_{1}^{\prime\prime}a_{2}^{\prime\prime\prime
}+b_{2}^{\prime}\beta_{1}^{\prime\prime}a_{2}^{\prime\prime\prime}%
+b_{2}^{\prime}b_{1}^{\prime\prime}\beta_{2}^{\prime\prime\prime}=\beta_{2}.
\end{align}

The unique querelement in $GL^{\left[  \left[  3\right]  \right]  }\left(
1\mid1,\Lambda\right)  $ can be found from the equation (see (\ref{mqq}))%
\begin{equation}
\mathrm{Q}_{\mathbf{y}_{1},\mathbf{y}_{2}}\mathrm{Q}_{\mathbf{y}%
_{1},\mathbf{y}_{2}}\overline{\mathrm{Q}}_{\mathbf{y}_{1},\mathbf{y}_{2}%
}=\mathrm{Q}_{\mathbf{y}_{1},\mathbf{y}_{2}},
\end{equation}
where the solution is%
\begin{equation}
\overline{\mathrm{Q}}_{\mathbf{y}_{1},\mathbf{y}_{2}}=\left(
\begin{array}
[c]{cc}%
0 & \overline{\mathrm{B}}_{\mathbf{y}_{1}}\\
\overline{\mathrm{B}}_{\mathbf{y}_{2}} & 0
\end{array}
\right)  ,
\end{equation}
with (see (\ref{bb1}))%
\begin{equation}
\overline{\mathrm{B}}_{\mathbf{y}_{1}}=\mathrm{B}_{\mathbf{y}_{2}}%
^{-1},\ \ \ \ \overline{\mathrm{B}}_{\mathbf{y}_{2}}=\mathrm{B}_{\mathbf{y}%
_{1}}^{-1},
\end{equation}
and $\mathrm{B}_{\mathbf{y}_{1}}^{-1},\mathrm{B}_{\mathbf{y}_{2}}^{-1}\in
GL\left(  1\mid1,\Lambda\right)  $.

\begin{definition}
We call $GL^{\left[  \left[  3\right]  \right]  }\left(  1\mid1,\Lambda
\right)  $ a polyadic (ternary) general linear supergroup obtained by the
polyadization procedure from the binary linear supergroup $GL\left(
1\mid1,\Lambda\right)  $.
\end{definition}

The ternary identity $\mathrm{E}_{4}^{\left(  3\right)  }$ of $GL^{\left[
\left[  3\right]  \right]  }\left(  1\mid1,\Lambda\right)  $ has the form (see
(\ref{e}))%
\begin{equation}
\mathrm{E}_{4}^{\left(  3\right)  }=\left(
\begin{array}
[c]{cc}%
0 & \mathrm{E}_{2}\\
\mathrm{E}_{2} & 0
\end{array}
\right)  ,
\end{equation}
where $\mathrm{E}_{2}$ is the identity of $GL\left(  1\mid1,\Lambda\right)  $,
and is ternary idempotent%
\begin{equation}
\mathrm{E}_{4}^{\left(  3\right)  }\mathrm{E}_{4}^{\left(  3\right)
}\mathrm{E}_{4}^{\left(  3\right)  }=\mathrm{E}_{4}^{\left(  3\right)  }.
\end{equation}

The ternary supergroup $GL^{\left[  \left[  3\right]  \right]  }\left(
1\mid1,\Lambda\right)  $ contains the infinite number of ternary idempotents
$\mathrm{Q}_{\mathbf{y}_{1},\mathbf{y}_{2}}^{idemp}$ defined by the system of
equations%
\begin{equation}
\mathrm{Q}_{\mathbf{y}_{1},\mathbf{y}_{2}}^{idemp}\mathrm{Q}_{\mathbf{y}%
_{1},\mathbf{y}_{2}}^{idemp}\mathrm{Q}_{\mathbf{y}_{1},\mathbf{y}_{2}}%
^{idemp}=\mathrm{Q}_{\mathbf{y}_{1},\mathbf{y}_{2}}^{idemp},
\end{equation}
which gives%
\begin{equation}
\mathrm{B}_{\mathbf{y}_{1}}^{idemp}\mathrm{B}_{\mathbf{y}_{2}}^{idemp}%
=\mathrm{E}_{2}. \label{bbe}%
\end{equation}

Therefore, the idempotents are determined by $8-4=4$ Grassmann parameters. One
of the ways to realize this is to exclude from (\ref{bbe}) the $2\times2$
$\mathrm{B}$-supermatrix. In this case, the idempotents in the supergroup
$GL^{\left[  \left[  3\right]  \right]  }\left(  1\mid1,\Lambda\right)  $
become%
\begin{equation}
\mathrm{Q}_{\mathbf{y}_{1},\mathbf{y}_{2}}^{idemp}=\left(
\begin{array}
[c]{cc}%
0 & \mathrm{B}_{\mathbf{y}_{1}}\\
\left(  \mathrm{B}_{\mathbf{y}_{1}}\right)  ^{-1} & 0
\end{array}
\right)  ,
\end{equation}
where $\mathrm{B}_{\mathbf{y}_{i}}\in GL\left(  1\mid1,\Lambda\right)  $ is an
invertible $2\times2$ supermatrix of the standard form (see \textit{Remark
}\ref{rem-idemp}).

In the same way one can polyadize any supergroup that can be presented by supermatrices.

\section{\textsc{Conclusions}}

In this paper we have given answers to the following important questions: how
to obtain nonderived polyadic structures from binary ones, and what would be a
matrix form of their semisimple versions? First, we introduced a general
matrix form for polyadic structures in terms of block-shift matrices. If the
blocks correspond to a binary structure (a ring, semigroup, group or
supergroup), this can be treated as a polyadization procedure for them.
Second, the semisimple blocks which further have a block-diagonal form give
rise to semisimple nonderived polyadic structures. For a deeper and expanded
understanding of the new constructions introduced, we have given clarifying
examples. The polyadic structures presented can be used, e.g. for the further
development of differential geometry and operad theory, as well as in other
directions which use higher arity and nontrivial properties of the constituent
universal objects.\bigskip

\textbf{Acknowledgements.} The author is grateful to Vladimir Akulov, Mike
Hewitt, Dimitrij Leites, Vladimir Tkach, Raimund Vogl and Alexander Voronov
for useful discussions, and valuable help.

\pagestyle{emptyf}
\mbox{}
\vskip 1cm

\end{document}